\documentclass[10pt]{article}
\usepackage{amsmath,amssymb,amsthm,latexsym}

\setlength{\topmargin}{-0.5cm}
\setlength{\textheight}{22cm}
\setlength{\evensidemargin}{0.5cm}
\setlength{\oddsidemargin}{0.5cm}
\setlength{\textwidth}{15cm}

\newtheorem{theorem}{Theorem}[section]
\newtheorem{proposition}[theorem]{Proposition}
\newtheorem{lemma}[theorem]{Lemma}
\newtheorem{corollary}[theorem]{Corollary}
\theoremstyle{definition}
\newtheorem{definition}[theorem]{Definition}

\theoremstyle{remark}
\newtheorem{remark}[theorem]{Remark}

\newcommand{\N}{\mathbb{N}}
\newcommand{\Z}{\mathbb{Z}}

\newcommand{\R}{\mathbb{R}}
\newcommand{\C}{\mathbb{C}}

\newcommand{\ain}{{a_{i,n}}}
\newcommand{\ajn}{{a_{j,n}}}
\newcommand{\tA}{\tilde{A}}
\newcommand{\Acentral}{A_{\omega}}

\newcommand{\Aomega}{{A^{\omega}}}
\newcommand{\Apositiveone}{{A_{+, \|\cdot\|}}}
\newcommand{\Atomega}{{A_{{\rm t}, \omega}}}
\newcommand{\Asecond}{{A^{**}}}
\newcommand{\alphabar}{\overline{\alpha}}
\newcommand{\alphaklm}{\alpha_{k,l,m}}
\newcommand{\talpha}{{\widetilde{\alpha}}}

\newcommand{\bin}{{b_{i,n}}}

\newcommand{\limn}{\displaystyle \lim_{n\to\infty}}
\newcommand{\limomega}{\displaystyle \lim_{n\to\omega}}
\newcommand{\liml}{\displaystyle \lim_{l\to\infty}}
\newcommand{\limlambda}{\displaystyle \lim_{\lambda}}

\newcommand{\linfty}{\ell^{\infty}}
\newcommand{\lNn}{{l_{N, n}}}

\newcommand{\cphi}{c_{\varphi}}

\newcommand{\elambda}{{e_{\lambda}}}
\newcommand{\eii}{{e_{i,i}}}
\newcommand{\eij}{{e_{i,j}}}
\newcommand{\ein}{{e_{i,n}}}
\newcommand{\ejn}{{e_{j,n}}}
\newcommand{\einl}{{e_{i,n,l}}}
\newcommand{\einln}{e_{i,n,l_n}}
\newcommand{\fil}{{f_{i,l}}}
\newcommand{\filn}{{f_{i,l_n}}}
\newcommand{\fjln}{{f_{j,l_n}}}
\newcommand{\flambda}{{f_{\lambda}}}
\newcommand{\forany}{{\rm\ for\ any\ }}
\newcommand{\Iomega}{{I_{\omega}}}
\newcommand{\Js}{\mathcal{Z}}
\newcommand{\Jtau}{J_{\tau}}
\newcommand{\T}{\ensuremath{\mathbb{T}}}
\newcommand{\Cs}{$\mathrm{C}^*$-al\-ge\-bra}
\newcommand{\Css}{$\mathrm{C}^*$-sub\-al\-ge\-bra}

\newcommand{\cR}{\mathcal{R}}

\newcommand{\cM}{{\mathcal{M}}}

\newcommand{\dt}{d_{\tau}}
\newcommand{\dextreme}{\partial_{\rm e}}
\newcommand{\te}{{\tilde{e}}}
\newcommand{\tfn}{{\tilde{f}_n}}
\newcommand{\TA}{{T(A)}}

\newcommand{\ulambda}{{u_{\lambda}}}
\newcommand{\uEel}{{u_{E, \varepsilon, \lambda}}}
\newcommand{\varphilambda}{{\varphi_{\lambda}}}
\newcommand{\varphiomega}{{\varphi}^{\omega}}
\newcommand{\varphiomegalambda}{{\varphi}_{\lambda}^{\omega}}
\newcommand{\varphiomegalambdan}{{\varphi}_{\lambda_n}^{\omega}}
\newcommand{\varphitaun}{{\varphi}_{\tau, n}}
\newcommand{\xlambda}{{x_{\lambda}}}

\newcommand{\yln}{{y_{l,n}}}
\newcommand{\ylnn}{{y_{l_n,n}}}

\DeclareMathOperator{\id}{id}
\DeclareMathOperator{\Ad}{Ad}

\begin{document}
\title{Actions of amenable groups and  crossed products  of $\Js$-absorbing \Cs s}
\author{
Yasuhiko Sato \thanks{The author was supported by JSPS KAKENHI, Grant Number JP 15K17553, JSPS Overseas Research Fellowships,  and Department of Mathematics at  Purdue University.}\\
}
\date{}

\maketitle
\begin{abstract} 
We study actions of countable discrete amenable groups on unital separable simple nuclear $\Js$-absorbing \Cs{}s. 
Under a certain assumption on tracial states, which is automatically satisfied  in the case of a unique tracial state, the crossed product is shown to absorb the Jiang-Su algebra $\Js$ tensorially. 
\end{abstract}

\section{Introduction}\label{Sec1}

In 1993, O. Bratteli, D. E. Evans, and A. Kishimoto \cite{BEK} determined a concrete structure of crossed products for outer Bogoliubov (quasi-free) actions of $\Z$. They showed that those crossed products of the CAR algebra are divided in two cases. In the case that the action is strongly outer, they showed that it has the Rohlin property and the crossed product is isomorphic to a simple inductive limit of $C(\T)\otimes M_k$, called an AT-algebra \cite{Ell}, of real rank zero with a unique tracial state. On the other hand, if the action is not strongly outer but still outer, then the crossed product is isomorphic to a simple inductive limit of subalgebras of $C(\T)\otimes M_k$. In that case the extreme boundary of trace space is $\T$. It was not known at that time that the crossed product of outer actions can be classified, however the recent classification theorem \cite{LN}, \cite{Win}, \cite{EGLN}, \cite{TiWW} shows that both cases are classifiable within the Elliott program \cite{ET}. By this reason, we now notice that in both cases these crossed products absorb the Jiang-Su algebra tensorially. This paper is inspired by the results from \cite{BEK} led us to explore the classifiability of crossed products of outer actions.  

The following is the main result of this paper. 
\begin{theorem}\label{ThmMain}
Let $G$ be a countable discrete amenable group,  $A$ a unital separable simple nuclear \Cs, and $\alpha$ an action of $G$ on $A$.  Assume that $A$ absorbs the Jiang-Su algebra $\Js$ tensorially, (i.e., $A\otimes \Js \cong A$), the extreme boundary of the trace space of $A$ is compact and finite dimensional and that $\alpha$ fixes any tracial state of $A$.
Then the crossed product \Cs{} $A\rtimes_{\alpha} G$ also absorbs the Jiang-Su algebra tensorially.
\end{theorem}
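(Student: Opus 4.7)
The plan is to apply a Matui--Sato type criterion for $\Js$-absorption: a unital separable nuclear \Cs{} $B$ absorbs $\Js$ tensorially provided (a) $B$ has property (SI), and (b) for every $k\ge 2$, there is a unital $*$-homomorphism from $M_k$ into the quotient of the relative central sequence algebra $B^{\omega}\cap B'$ by its trace-kernel ideal $J_B$. The proof then reduces to verifying (a) and (b) for $B=A\rtimes_\alpha G$, using as input the $\Js$-absorption of $A$, the hypotheses on $T(A)$, and the trace-preservation of $\alpha$. Nuclearity of $B$ is automatic from nuclearity of $A$ and amenability of $G$, and the canonical conditional expectation $E\colon B\to A$ identifies $T(B)$ with the $\alpha$-fixed subset of $T(A)$, which by hypothesis is all of $T(A)$.

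For condition (b), I would use $\Js$-absorption of $A$ to produce, for each $k$, a unital $*$-homomorphism $M_k\to (A^{\omega}\cap A')/J_A$. Amenability of $G$ then allows a Følner averaging argument of Ocneanu type: after averaging over a sufficiently invariant Følner sequence and reselecting along $\omega$, one lands inside the $\alpha$-fixed subalgebra $((A^{\omega}\cap A')/J_A)^\alpha$. The hypothesis that $\alpha$ fixes every tracial state is essential here, as it guarantees $J_A$ is $\alpha$-invariant so that the averaging descends cleanly to the trace-kernel quotient. Elements of the $\alpha$-fixed subalgebra of $A^{\omega}$ commute both with $A$ (by centrality) and with the implementing group unitaries inside the crossed product (by $\alpha$-invariance), so they embed naturally into $B^{\omega}\cap B'$ modulo $J_B$, yielding the desired matrix embedding.

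The main obstacle is establishing property (SI) for the crossed product. Property (SI) -- that a central positive sequence in $A^{\omega}\cap A'$ which is uniformly tracially small can be absorbed into a full one modulo $J_A$ -- is by now known for $\Js$-absorbing $A$ with compact finite-dimensional $\dextreme T(A)$. To transfer (SI) to $B$, one exploits the identification of $T(B)$ with $T(A)$ noted above, but the real difficulty is that fullness in $B$ is strictly stronger than fullness in $A$, and Cuntz comparison must be controlled equivariantly over each extreme trace. The finite-dimensionality of $\dextreme T(A)$ permits a partition-of-unity argument on the trace simplex: one performs local Cuntz comparisons in each fibre over an extreme trace, patches them continuously using the finite covering dimension, and then averages the result over $G$ to convert the local $A$-comparison into the global comparison needed inside $B$.

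Combining (a) and (b) via the Matui--Sato framework yields $\Js$-absorption of $A\rtimes_\alpha G$. I expect the most delicate coordination to be between the Følner averaging in (b) and the (SI) construction in (a): both must be carried out compatibly on the trace-kernel quotient, so that the approximately $\alpha$-equivariant matrix units from (b) and the full absorbing positive element from (a) together assemble into a unital embedding of $\Js$ into the central sequence algebra of the crossed product.
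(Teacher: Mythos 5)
Your overall architecture (matrix embedding into the central sequence algebra plus an (SI)-type absorption, assembled into a dimension-drop algebra) is the right family of ideas, and your treatment of condition (b) --- Ocneanu/F\o lner averaging to land in the $\alpha$-fixed part of $(A^{\omega}\cap A')/J_A$, using $\alpha$-invariance of the trace-kernel ideal --- matches Section 3 of the paper in spirit. But the plan for condition (a) contains a genuine error and a genuine gap. The error: $T(A\rtimes_\alpha G)$ is \emph{not} identified with the $\alpha$-invariant traces of $A$ via the conditional expectation unless the action is strongly outer; there can be many traces on the crossed product that do not factor through $E$. The Bratteli--Evans--Kishimoto example recalled in the introduction is a counterexample: an outer but not strongly outer Bogoliubov $\Z$-action on the CAR algebra (unique trace) whose crossed product has extreme tracial boundary $\T$. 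Since the theorem assumes nothing about outerness (indeed $A\rtimes_\alpha G$ need not even be simple), your strategy of verifying property (SI) and Cuntz comparison for $B=A\rtimes_\alpha G$ by working over $T(B)\cong T(A)$ collapses at the first step; there is no control over $T(B)$, over strict comparison for $B$, or over simplicity of $B$.

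The paper avoids $T(B)$ entirely: it builds a unital copy of $\Js$ inside the $\alpha$-fixed point subalgebra of $\Acentral$, which then sits in the central sequence algebra of the crossed product because $\alpha$-fixed central sequences commute with both $A$ and the implementing unitaries. The technical heart, which your proposal is missing, is how to make the F\o lner average of an (SI)-isometry again an isometry: if $r$ merely satisfies $r^*r=e$ and $fr=r$, then $|F|^{-1/2}\sum_{f\in F}\alpha_{g_f}(r)$ has nonvanishing cross terms $\alpha_{g_f}(r)^*\alpha_{g_h}(r)$ and fails to square to $e$. The paper's solution is the strictly stronger property (TI) (``tiny isometries'', Proposition \ref{PropTI}) combined with the small Rohlin tower of Proposition \ref{PropOuter} for the outer part $G/K$ of the action: the isometry $r_n$ is forced to be supported under $y_n=x_{l_n}^{1/2}f_nx_{l_n}^{1/2}$, where the translates $\alpha_g(x_l)$, $g\notin K$, are asymptotically orthogonal, so the cross terms vanish and $s_n=|F|^{-1/2}\sum_{f}\alpha_{g_f}(r_n)$ satisfies $(s_n^*s_n)_n=(e_n)_n$ exactly (Proposition \ref{PropSI}). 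Your ``partition-of-unity over the trace simplex, then average over $G$'' step does not supply any mechanism of this kind, and without it the equivariant (SI) --- the one statement the whole argument turns on --- is not established.
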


Combining this with \cite[Theorem F]{BBSTWW}, we can estimate the nuclear dimension of crossed products assuming the trace space is a Bauer simplex (i.e., its
extreme boundary is compact). 

\begin{corollary}Let $G$ be a countable discrete amenable group, $A$ a unital separable simple nuclear $\Js$-absorbing \Cs\ with a unique tracial state, and $\alpha$ an outer action of $G$ on $A$. If the trace space of $A\rtimes_{\alpha} G$ is a Bauer simplex, then the nuclear dimension of $A\rtimes_{\alpha} G$ is at most one. 
\end{corollary}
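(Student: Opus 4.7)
The plan is to verify the hypotheses of Theorem~\ref{ThmMain} in this unique-trace setting, then feed the resulting $\Js$-absorption into \cite[Theorem F]{BBSTWW}.

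First I would observe that when $A$ has a unique tracial state $\tau$, the trace space $T(A)=\{\tau\}$ is a single point, so its extreme boundary is a compact, zero-dimensional (hence trivially finite-dimensional) space. Moreover $\alpha$ permutes tracial states, and there being only one, $\alpha$ fixes $\tau$ automatically. Thus every hypothesis of Theorem~\ref{ThmMain} is met, and the theorem gives $(A\rtimes_\alpha G)\otimes\Js\cong A\rtimes_\alpha G$.

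Next I would check that $A\rtimes_\alpha G$ satisfies the hypotheses of \cite[Theorem F]{BBSTWW}: unital, separable, simple, nuclear, $\Js$-absorbing, with Bauer simplex trace space. Unitality is inherited from $A$ (with $G$ discrete), separability from the countability of $G$ together with separability of $A$, and nuclearity from nuclearity of $A$ combined with amenability of $G$. The $\Js$-absorption was just established, and the Bauer simplex condition is part of the hypothesis. Simplicity of the crossed product follows from simplicity of $A$ together with outerness of $\alpha$, via the Kishimoto-type simplicity theorem for outer actions of countable discrete (amenable) groups on simple $C^*$-algebras. Applying \cite[Theorem F]{BBSTWW} then yields nuclear dimension at most one.

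There is no genuine obstacle here: Theorem~\ref{ThmMain} is the substantive input, and the uniqueness of the tracial state trivializes its tracial hypotheses, while the remaining verifications are standard permanence properties of crossed products by countable discrete amenable groups. The corollary should therefore follow by an almost purely bookkeeping combination of Theorem~\ref{ThmMain} with \cite[Theorem F]{BBSTWW}.
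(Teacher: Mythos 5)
Your proposal is correct and follows exactly the paper's intended (one-sentence) argument: the unique tracial state trivially satisfies the tracial hypotheses of Theorem~\ref{ThmMain}, outerness gives simplicity of the crossed product so that \cite[Theorem F]{BBSTWW} applies, and the conclusion follows. Nothing further is needed.
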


The Jiang-Su algebra $\Js$ is constructed as a unital separable simple infinite-dimensional nuclear \Cs\ with a unique tracial state whose K-theoretic invariants are same as that of the complex numbers,  \cite{JS}. In the current classification theorem of \Cs s, the absorption of $\Js$ is regarded as one of the regularity properties of classifiable \Cs s. Actually, A. S. Toms and W. Winter conjectured that $\Js$-absorption is equivalent to other regularity properties, such as strict comparison and finiteness of nuclear dimension, for unital separable simple infinite-dimensional nuclear \Cs s, \cite{Win1}, \cite{Win2}. At the present stage, we know that this conjecture holds true under the tracial condition in Theorem \ref{ThmMain}, and hence in the case of unique tracial state, \cite{Ror}, \cite{Win2}, \cite{MS3}, \cite{Sat1}, \cite{KR}, \cite{TWW}, \cite{BBSTWW}. In the same direction, the main theorem focusses on the  classifiability of crossed products for outer actions. 

The first remarkable result on $\Js$-absorbing crossed products was obtained by I. Hirshberg and W. Winter in \cite{HW}. They showed the permanence property of $\Js$-absorption  under taking crossed products by $\Z$, $\R$, and compact groups, assuming the Rohlin property. More generally, they showed such a result for $D$-stability of strongly self-absorbing K$_1$-injective \Cs\ $D$. In a joint work with H. Matui \cite{MS2}, we proved $\Js$-absorption for twisted crossed products of elementary amenable groups assuming strong outerness and some mild conditions for given amenable \Cs s. Introducing the notion of Rokhlin dimension for a certain class of groups, Hirshberg, Winter and J. Zacharias also showed a permanence result of $\Js$-absorption for crossed products under finiteness of Rokhlin dimension \cite{HWZ}.   The main result of this paper widens the study of $\Js$-absorbing crossed products in the directions of amenable groups and actions without outerness.

\bigskip 

\noindent Before ending the introduction, let us fix some notations. 
Throughout this paper, we let $\omega$ be a free ultrafilter on the set of natural numbers $\N$. For a separable \Cs{} $A$, we denote by $A^{\omega}$ the ultraproduct \Cs\ defined by  
\[ \Aomega= \linfty(\N, A)/\left\{(a_n)_n\in \linfty (\N, A)\ |\ \limomega\left\lVert a_n \right\rVert =0\right\}.\]
Regarding $A$ as the \Css\ of $\Aomega$ consisting of all constant sequences, we define the central sequence algebra by $\Acentral = A' \cap \Aomega$. 
A sequence $(a_n)_n\in \linfty(\N, A)$ is called a central sequence, if it is a representative of an element in $\Acentral$.

For an automorphism $\alpha$ of $A$, we can consider its standard extension on $\Aomega$ and $\Acentral$. We denote it by the same symbol $\alpha$. 
We denote by $\TA$ the convex set of all tracial states of $A$ and $\dextreme(\TA)$ the extreme boundary of $\TA$. 

\section{A small Rohlin tower}\label{Sec2}
We start with a characterization of outerness by a small Rohlin tower, which is based on \cite[Lemma 1.1]{Kis1}. In the case of simple purely infinite \Cs s, this result is known to be obtained by a tower of projections, \cite[Lemma 3]{Nak}, \cite[Lemma 3.4]{IM}. 

\begin{proposition} \label{PropOuter}
Let $A$ be a separable simple \Cs, and $\{\alpha_i\}_{i\in \N}$ a countable set of automorphisms of $A$.  Then  $\alpha_i$ is outer for any $i\in\N$, if and only if there exists a net $(\flambda)_{\lambda\in\Lambda}$ of positive contractions in $A$ such that 
\[\limlambda \left\lVert[\flambda, a]\right\rVert =0,\quad \limlambda\left\lVert \alpha_i (\flambda)\flambda\right\rVert=0,\] 
for any $a\in A$ and $i\in\N$, and $f_{\lambda}$ converges to a central projection of $\Asecond$ strongly.
\end{proposition}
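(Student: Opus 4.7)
I would prove the two implications separately.

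\emph{Sufficiency $(\Leftarrow)$.} Suppose for contradiction that some $\alpha_{i_0}$ is inner, say $\alpha_{i_0}=\Ad u$ for a unitary $u\in A$. The asymptotic centrality hypothesis applied to $a=u$ yields
\[
\bigl\|\alpha_{i_0}(\flambda)\flambda-\flambda^2\bigr\|
=\bigl\|u\flambda u^*\flambda-\flambda^2\bigr\|\longrightarrow 0,
\]
using $[\flambda,u^*]\to 0$ and $[\flambda^2,u]\to 0$. Let $p\in\Asecond$ be the (necessarily nonzero) central projection to which $\flambda$ converges strongly, and pick a unit vector $\xi$ in its range in a faithful normal representation of $\Asecond$. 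Then $\|\flambda\xi\|^2=\langle\flambda^2\xi,\xi\rangle\to 1$, so $\liminf_\lambda\|\flambda^2\|\geq 1$, contradicting $\limlambda\|\alpha_{i_0}(\flambda)\flambda\|=0$.

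\emph{Necessity $(\Rightarrow)$.} The main ingredient is Kishimoto's Lemma \cite[Lemma~1.1]{Kis1} for a single outer automorphism of a separable simple \Cs. I would strengthen it in two ways: first, to a finite family $\alpha_1,\ldots,\alpha_m$ of outer automorphisms simultaneously, by iterating the Powers--Sakai averaging inside the hereditary subalgebra cut down by the $f$ produced at the previous step; and second, to incorporate a state lower bound $\varphi(f)>1-\varepsilon$ for a given state $\varphi$, by starting the averaging from an element already concentrated on the support of $\varphi$. The combined strengthened lemma produces, for any finite $F\subset A$, any finite family of outer automorphisms, any state $\varphi$, and any $\varepsilon>0$, a positive contraction $f\in A$ of norm one with $\|[f,a]\|<\varepsilon$ on $F$, $\|\alpha_i(f)f\|<\varepsilon$ for all $i$, and $\varphi(f)>1-\varepsilon$.

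\emph{Assembly of the net.} Fix a faithful state $\varphi$ on $A$ (available since $A$ is separable) and a norm-dense sequence $\{a_k\}\subset A$; direct $\Lambda=\N\times(0,1)$ by $(n,\varepsilon)\leq(n',\varepsilon')$ iff $n\leq n'$ and $\varepsilon'\leq\varepsilon$; and for $\lambda=(n,\varepsilon)$ let $\flambda$ be the output of the strengthened lemma on $(\{a_1,\ldots,a_n\},\{\alpha_1,\ldots,\alpha_n\},\varphi,\varepsilon)$. The commutator and orthogonality limits follow by density. To promote strong convergence to a central projection, I would pre-compose with a continuous step function $g_\varepsilon\colon[0,1]\to[0,1]$ vanishing on $[0,\varepsilon]$ and equal to one on $[1-\varepsilon,1]$, replacing $\flambda$ by $g_\varepsilon(\flambda)$; the three estimates persist by standard functional-calculus bookkeeping, using the telescoping identity $\|\alpha_i(f)^mf^n\|\leq\|\alpha_i(f)f\|$ for $m,n\geq 1$, while the spectrum of the modified $\flambda$ concentrates in $[0,\varepsilon]\cup[1-\varepsilon,1]$. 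Any weak-$*$ cluster point $p\in\Asecond$ then lies in $A'\cap\Asecond$ by asymptotic centrality (which equals the centre of $\Asecond$ in the universal representation), is a projection by the spectral concentration, and is nonzero because $\varphi(p)=1$; a diagonal subnet upgrades weak-$*$ to the strong convergence of the whole net. The principal obstacle is the strengthened Kishimoto lemma itself: the averaging that forces $\|\alpha_i(f)f\|<\varepsilon$ disperses the support of $f$ and hence degrades $\varphi(f)$, requiring each inductive step in $i$ to be performed inside a progressively shrunk hereditary subalgebra on which $\varphi$ remains concentrated.
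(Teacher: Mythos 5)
Your sufficiency direction is correct and is essentially the paper's (the paper is even terser: it just notes that $\alpha_i(\flambda)\flambda$ would converge strongly to the nonzero central projection, contradicting norm convergence to $0$). The necessity direction, however, contains a fatal gap: the ``strengthened Kishimoto lemma'' you propose, with the lower bound $\varphi(f)>1-\varepsilon$ for a \emph{faithful} state $\varphi$, is false in general. If $A$ is unital with an $\alpha_i$-invariant faithful tracial state $\tau$ (e.g.\ any UHF algebra with any automorphism), then $\|\alpha_i(f)f\|<\delta$ forces $f$ and $\alpha_i(f)$ to be almost orthogonal positive contractions, hence $\tau(f)+\tau(\alpha_i(f))=2\tau(f)\leq\|f+\alpha_i(f)\|\leq 1+o(1)$ as $\delta\to 0$, which is incompatible with $\tau(f)>1-\varepsilon$. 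The difficulty you yourself flag at the end (``the averaging disperses the support of $f$ and hence degrades $\varphi(f)$'') is not a technical obstacle to be engineered around; it is an honest obstruction. A second, independent gap is the passage to a central \emph{projection}: precomposing with $g_\varepsilon$ does not concentrate the spectrum of $\flambda$ in $[0,\varepsilon]\cup[1-\varepsilon,1]$ (it only flattens the ends), a weak-$*$ cluster point of positive contractions with concentrated spectrum need not be a projection anyway, and nothing in your construction upgrades weak-$*$ convergence to strong convergence.

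The paper's route avoids both problems by choosing the limit projection \emph{first}. It proves (Theorem \ref{ThmPureState}, by iterating Kishimoto's Lemma 1.1 through a decreasing chain of hereditary subalgebras $\overline{a_nAa_n}$ and taking an extreme point of the face $\{\psi : \psi(f_n)=1\ \forall n\}$) that there is a \emph{pure} state $\varphi$ with $\varphi\circ\alpha_i$ disjoint from $\varphi$ for all $i$. The central cover $\cphi\in\Asecond$ then satisfies $\overline{\alpha}_i(\cphi)\cphi=0$ exactly, and the Akemann--Pedersen result \cite[Lemma 1.1]{AP} supplies an asymptotically central net of positive contractions $\xlambda$ converging strongly to $\cphi$ --- so centrality, the projection property, nonvanishing, and strong convergence all come for free from $\cphi$. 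Note that $\cphi$ is typically null for every trace, which is exactly why no conflict with traces arises; your requirement $\varphi(p)=1$ for faithful $\varphi$ asks for the opposite. The remaining step, converting the \emph{strong} vanishing of $\alpha_i(\xlambda)\xlambda$ into \emph{norm} vanishing, is done by the cut-down $\flambda=\xlambda^{1/2}(1-g_m(\elambda))\xlambda^{1/2}$ with $\elambda=\xlambda^{1/2}(\sum_{i\in F}\alpha_i(\xlambda))\xlambda^{1/2}$, which gives $\|\alpha_i(\flambda)\flambda\|^2<1/m$ by functional calculus while preserving the strong limit $\cphi$. If you want to repair your argument, replacing the faithful state by the pure state of Theorem \ref{ThmPureState} and the ad hoc spectral manipulation by the central-cover/Akemann--Pedersen mechanism is essentially forced.
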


The positive element in the above proposition can be derived from a pure state in the next theorem. This is just a variation of \cite[Theorem 2.1]{Kis1} for countable outer automorphisms.

\begin{theorem}\label{ThmPureState}
Let $A$ be a separable simple \Cs{} and $\{\alpha_i\}_{i\in\N}$ a set of outer automorphisms of $A$. Then there exists a pure state $\varphi$ of $A$ such that $\varphi\circ \alpha_i$ is disjoint from $\varphi$ for any $i\in \N$.
\end{theorem}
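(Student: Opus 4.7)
The plan is to reduce to Kishimoto's single-automorphism version \cite[Theorem 2.1]{Kis1} by a Baire-category argument on the pure state space. Since $A$ is separable, the pure state space $P(A)$ equipped with the weak-$*$ topology is Polish — it sits as a $G_{\delta}$ subset of the compact metrisable state space of the unitisation — and is therefore a Baire space. For each $i \in \N$ set
\[
U_i = \{\varphi \in P(A) : \varphi \text{ and } \varphi\circ\alpha_i \text{ are disjoint}\}.
\]
The goal is to prove that each $U_i$ is a dense $G_{\delta}$; the Baire category theorem will then yield a pure state in $\bigcap_{i\in\N} U_i$, and any such pure state is the one required.

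First, I would check that each $U_i$ is $G_{\delta}$. Since $A$ is simple, every pure state has a faithful irreducible GNS representation, so disjointness of $\varphi$ and $\varphi\circ\alpha_i$ is equivalent to non-equivalence of the irreducible representations $\pi_\varphi$ and $\pi_{\varphi\circ\alpha_i}$. The complementary relation — existence of an intertwining unitary between the two GNS representations — can be written as a countable union of weak-$*$ closed conditions by testing intertwiners against a countable dense subset of $A$ with discretised tolerances, so $U_i$ is $G_{\delta}$.

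The key step is density of $U_i$, and this is essentially Kishimoto's original theorem applied to the single outer automorphism $\alpha_i$: given $\varphi_0 \in P(A)$, a finite set $F \subset A$, and $\varepsilon > 0$, outerness of $\alpha_i$ together with the non-commutative Rohlin-type estimate underlying \cite[Lemma 1.1]{Kis1} allows the Akemann--Anderson--Pedersen excision procedure to produce a pure state $\varphi$ agreeing with $\varphi_0$ to within $\varepsilon$ on $F$ and whose support projection in $\Asecond$ is orthogonal to that of $\varphi \circ \alpha_i$; such a $\varphi$ belongs to $U_i$. With density in hand for every $i$, Baire category in $P(A)$ gives that $\bigcap_{i\in\N} U_i$ is dense, hence non-empty. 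The main obstacle is the density assertion for a single $U_i$, which is precisely the content of \cite[Theorem 2.1]{Kis1}; what is genuinely new here is only the observation that Kishimoto's construction is robust enough to be read as producing a dense set of pure states rather than a single one, after which the passage to countably many automorphisms is a clean Baire application.
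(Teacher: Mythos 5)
Your Baire-category route is genuinely different from the paper's argument, and it can be made to work, but as written the load-bearing step is under-justified. The paper handles everything in a single interleaved induction: it fixes a dense sequence of unitaries $u_n$, uses a bijection $\Phi:\N\to\N\times\N$ to enumerate all pairs $(\Ad u_i)\circ\alpha_j$, and applies \cite[Lemma 1.1]{Kis1} inside a nested sequence of hereditary subalgebras to produce a decreasing sequence $(f_n)$ with $\|f_n\,(\Ad u_i\circ\alpha_j)(f_n)\|<\varepsilon$ for $(i,j)=\Phi(n)$; an extreme point of the closed face $\{\psi : \psi(f_n)=1\ \forall n\}$ is then shown to be disjoint from all $\varphi\circ\alpha_j$ via Kadison transitivity. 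In your scheme, the passage from one automorphism to countably many is delegated to Baire category, but that forces you to prove that each $U_i$ is a \emph{dense} $G_\delta$, and density of $U_i$ is not ``precisely the content of \cite[Theorem 2.1]{Kis1}'': that theorem produces \emph{some} pure state disjoint from its $\alpha_i$-translate, not one within $\varepsilon$ of a prescribed $\varphi_0$ on a prescribed finite set. To get density you must reopen Kishimoto's proof, excise $\varphi_0$ by a positive element $e$, and rerun the entire induction (over the dense family of unitaries, not just one application of \cite[Lemma 1.1]{Kis1}) inside the corner $\overline{eAe}$ --- and once you have done that localized induction, interleaving the countably many $\alpha_j$ via a bijection costs nothing further, so the Baire machinery ends up buying nothing.

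Two concrete soft spots. First, in your density sketch you say excision produces $\varphi$ whose support projection in $\Asecond$ is orthogonal to that of $\varphi\circ\alpha_i$; for pure states orthogonality of support projections does \emph{not} imply disjointness (two orthogonal vector states in the same irreducible representation have orthogonal supports but equivalent GNS representations). Disjointness requires orthogonality of the \emph{central} covers, equivalently that $\varphi\circ\Ad u\circ\alpha_i\neq\varphi$ for \emph{every} unitary $u$, and this is exactly why the dense family of unitaries must enter the induction even for a single $\alpha_i$. Second, your claim that $U_i$ is $G_\delta$ needs an actual argument; the clean one is that $\varphi$ fails to lie in $U_i$ iff $\|\varphi\circ\Ad u_n\circ\alpha_i-\varphi\|<2$ for some $n$ (by the Glimm--Kadison criterion together with Kadison transitivity), and since the norm is a weak-$*$ lower semicontinuous function of $\varphi$, the complement of $U_i$ is a countable union of $F_\sigma$ sets. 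Both gaps are fillable, but the result is an argument that is strictly longer than the paper's, whose only ``trick'' is the bijection $\N\to\N\times\N$.
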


\begin{proof}
(This argument is essentially same as the proof of \cite[Theorem 2.1]{Kis1})
Let $\tA$ be the unitization of $A$ and $\{ u_n\}_{n\in\N}$ be a dense subset of the unitary group of $\tA$. Set $\sigma_n =\Ad u_n$ for $n\in\N$ and $\varepsilon \in (0, 1/2)$. We let $\Phi : \N \rightarrow \N\times \N$ be an isomorphism as sets. 

For $(i,j)\in \N\times\N$ with $(i,j)=\Phi(1)$, applying \cite[Lemma 1.1]{Kis1} to $A$ and $\sigma_i\circ\alpha_j$ we can obtain a positive element $f_1$ in $A$ such that $\|f_1\|=1$, $\|f_1\sigma_i\circ\alpha_j(f_1) \|<\varepsilon$. Taking a slight perturbation of $f_1$ we may find a positive element $a_1$ in $A$ such that $\|a_1\|=1$, $a_1f_1=a_1$. For $(i,j)\in \N\times \N$ with $(i,j)=\Phi(2)$, applying \cite[Lemma 1.1]{Kis1} to $\overline{a_1Aa_1}$ and $\sigma_i\circ\alpha_j$, we obtain positive elements $f_2$, $a_2$ in $\overline{a_1Aa_1}$ such that $\| f_2\|=\|a_2\|=1$, $a_2f_2= a_2$, and $\|f_2\sigma_i\circ\alpha_j(f_2)\|<\varepsilon$ for $(i,j)=\Phi(2)$. Note that $f_2f_1=f_2$. 

Repeating this argument inductively along with the order of $\Phi(n)$, $n\in\N$, we obtain sequences  $(f_n)_n$ and $(a_n)_n$ of positive elements in $\overline{a_{n-1}Aa_{n-1}}$ such that  $f_nf_{n-1}=f_n$, 
\[ \|f_n\|=\|a_n\| =1,\quad a_nf_n =a_n,\quad \|f_n\sigma_i\circ\alpha_j(f_n)\|<\varepsilon \quad{\rm for\ }(i,j)=\Phi(n).\]

We let $S$ be the set of all states $\psi$ of $A$ satisfying $\psi(f_n)=1$ for any $n\in\N$. Then it follows that $S$ is a non-empty closed face of the state space. The desired pure state $\varphi$ is obtained as an extremal point of $S$. Actually if $\varphi\circ\alpha_{j'}$ is not disjoint from $\varphi$, then the irreducible representations of $\varphi\circ\alpha_{j'}$ and $\varphi$ are spatially equivalent. Then Kadison's transitivity theorem allows us to get a unitary $u$ in $\tA$ such that $\varphi\circ\alpha_{j'}\circ\Ad u = \varphi $. Since $\{u_n\}_{n\in\N}$ is dense, there exists $i'\in\N$ such that $\|\varphi\circ\sigma_{i'}\circ\alpha_{j'} -\varphi\| < \varepsilon$, which contradicts to $\| f_n \sigma_{i'}\circ\alpha_{j'} (f_n)\| <\varepsilon$ for $n\in\N$ with $\Phi(n)=(i',j')$, because 
\[1=\varphi(f_n) \approx_{\varepsilon} \varphi \circ\sigma_{i'}\circ\alpha_{j'} (f_n) = \varphi(f_n\sigma_{i'}\circ\alpha_{j'}(f_n)) \approx_{\varepsilon} 0.\]

\end{proof}

\begin{proof}[Proof of Proposition \ref{PropOuter}]
Assume that $\{\alpha_i\}_{i\in\N}$ is a set of outer automorphisms.
By Theorem \ref{ThmPureState}, there exists a pure state $\varphi$ of $A$ such that $\varphi\circ\alpha_i$ is disjoint from $\varphi$ for any $i\in\N$. We denote by $\overline{\alpha}_i$ the extension of $\alpha_i$ on the enveloping  von Neumann algebra $\Asecond$, and $\cphi \in \Asecond$ the central cover of the irreducible representation associated with $\varphi$, then it follows that $\overline{\alpha}_i(\cphi)\cphi =c_{\varphi\circ\alpha_i^{-1}}c_{\varphi}=0$ for any $i\in\N$. Applying \cite[Lemma 1.1]{AP} we can obtain a bounded net $\xlambda \in A$, $\lambda \in \Lambda$ which converges to $\cphi$ strongly in $\Asecond$ and $\limlambda \| [x_{\lambda}, a] \| =0$ for any $a\in A$.
In particular, since $\cphi$ is a projection, the argument in the proof of \cite[Lemma 1.1]{AP} allows us to see $\xlambda$ are  positive contractions in $A$. Note that $\alpha_i(\xlambda)\xlambda$ converges to 0 strongly for all $i\in \N$.

Let $F$ be a finite subset of $\N$ and $\varepsilon > 0$. By a standard reindex argument it is enough to obtain a net $(\flambda)_{\lambda}$ of positive contractions in $A$ such that $\limlambda \left\lVert [f_{\lambda}, a]\right\rVert =0$ for any $a\in A$, $\limlambda \| \alpha_i (\flambda)\ \flambda\| <\varepsilon$ for any $i\in F$, and $(\flambda)_{\lambda}$ converges to $c_{\varphi}$ strongly in $\Asecond$.

Let $m\in \N$ be such that $1/m < \varepsilon$ and $g_m$ the continuous function on $[0, \infty)$ defined by $g_m (t) =\min \{ 1, mt\}$. We define positive elements $\elambda$, $\flambda$ in $A$, $\lambda\in \Lambda$ by 
\[ \elambda = \xlambda^{1/2}\left(\sum_{i\in F} \alpha_i(\xlambda)\right)\xlambda^{1/2},\quad \flambda = \xlambda^{1/2} (1- g_m(\elambda))\xlambda^{1/2}.\]
Then it follows that $\limlambda \| [\elambda, a] \| + \| [\flambda, a]\| =0$ for any $a\in A$, and $\elambda$, $g_m(\elambda)$ converge to 0 strongly, see \cite[Proposition 2.3.2]{Ped} for example. Hence we have $\flambda \to \cphi$ strongly. 

By the definition of $g_m$ it follows that for $i\in F$
\begin{align*}
\|\alpha_i(\flambda)\flambda \|^2 
&\leq \left\lVert\flambda\left(\sum_{i\in F} \alpha_i (\xlambda)\right)\flambda \right\rVert \\ 
&\leq \left\lVert (1- g_m (\elambda)) \elambda (1-g_m (\elambda))\right\rVert < 1/m.
\end{align*} 
A similar argument can be found in the proof of \cite[Theorem 4.5]{Kis2} and \cite[Lemma 3.2]{MS3}.

The converse direction is obvious. Actually, if there exists $i\in\N$ and a unitary $u$ in the unitization of $A$ such that $\alpha_i=\Ad u$, 
then $\alpha_i(\flambda)\flambda$ converges to a central projection in $\Asecond$ strongly. This is a contradiction.
\end{proof}

\section{ G-equivariant large embeddability of cones}
With the benefit of Connes' fundamental theorem for injective factors \cite{Con}, we have obtained tracially large embeddings of cones into ultraproducts of UHF algebras in \cite[Proposition 3.2]{SWW}. In this section, we show a $G$-equivariant version of it for amenable groups as follows.  

 For $\tau\in \TA$, we define the trace kernel ideal by 
\[\Jtau=\left\{ (a_n)_n \in \linfty (\N, A)\ |\ \limomega \tau(a_n^*a_n)=0\right\},\]
and we define
\[\Iomega=\left\{ (a_n)_n\in\linfty (\N, A)\ |\ \limomega \sup_{\tau\in \TA}\tau(a_n^*a_n)=0\right\}.\]
Since $\{(a_n)_n\in\linfty (\N, A)\ |\ \limomega \|a_n\|=0\}$ is contained in $\Iomega \subset \Jtau$, we regard $\Jtau$ and $\Iomega$ as ideals of $\Aomega$. We set $\Atomega= \Acentral/ (\Iomega\cap A' )$. 
We denote by $\id $ the identity map on $(0,1]$. 

\begin{proposition}\label{PropEmbedCone}
Let $G$ be a countable discrete amenable group, $A$ and $B$ unital separable  simple infinite-dimensional nuclear \Cs{}s  with at least one tracial state, and $\alpha$ an action of $G$ on $A$. Suppose that  an extremal tracial state $\tau$ of $A$ is a fixed point of $\alpha$. Then there exists an embedding $\varphi$ of  $C_0((0,1])\otimes B$ into $\Acentral$ such that 
\[\alpha_g (\varphi(x))=\varphi(x)\ \forany g\in G,   x\in C_0((0,1])\otimes B,\quad {\rm and\quad } 1-\varphi(\id \otimes 1_B)\in \Jtau.\]

\end{proposition}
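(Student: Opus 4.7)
By the Winter--Zacharias correspondence between $*$-ho\-mo\-mor\-phisms from the cone $C_0((0,1])\otimes B$ and completely positive contractive (CPC) order zero maps from $B$, constructing $\varphi$ is equivalent to producing a $G$-invariant CPC order zero map $\Psi:B\to\Acentral$ with $1-\Psi(1_B)\in\Jtau$. I would follow a $G$-equivariant refinement of the argument of \cite[Proposition 3.2]{SWW}: build the map at the hyperfinite II$_1$ level, lift it to $\Acentral$ by the cone trick, and finally use the amenability of $G$ to enforce exact equivariance.

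\textbf{Construction modulo $\Jtau$.} Since $A$ is simple, nuclear, infinite-dimensional and $\tau$ is extremal, Connes' theorem gives $\pi_{\tau}(A)''\cong\cR$, the hyperfinite II$_1$ factor, and $\alpha$ extends to a trace-preserving action $\overline{\alpha}$ of $G$ on $\cR$. The amenability of $G$ implies that the fixed-point subalgebra $(\cR^{\omega}\cap\cR')^{\overline{\alpha}}$ still contains a unital copy of $\cR$: Dixmier-type averaging of central sequences along a Følner sequence for $G$ produces approximately $G$-invariant central sequences, which one assembles into a genuine subfactor via the McDuff property of $\cR$ (alternatively, invoke Ocneanu's classification of amenable actions on $\cR$). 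Composing with a trace-preserving embedding $B\hookrightarrow\cR$, which exists because $B$ is unital, separable, nuclear with a tracial state, gives a unital $*$-homomorphism $\psi:B\to(\cR^{\omega}\cap\cR')^{\overline{\alpha}}$. The C$^*$-algebra $\Acentral/(\Jtau\cap A')$ maps $G$-equivariantly into $\cR^{\omega}\cap\cR'$ by a Kaplansky density / Kirchberg $\varepsilon$-test argument. Using the nuclearity of $B$, take a Choi--Effros ucp lift of $\psi$ to $\Acentral$ and apply the cone-lifting trick from \cite{SWW} (continuous functional calculus rescaling by a function of the image of $1_B$ absorbing the multiplicative defect) to produce a CPC order zero map $\Psi:B\to\Acentral$ with $1-\Psi(1_B)\in\Jtau$ and $\alpha_g(\Psi(b))-\Psi(b)\in\Jtau\cap A'$ for every $g\in G$, $b\in B$.

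\textbf{Upgrading to exact $G$-equivariance.} This is the main obstacle, because convex averaging over $G$---the natural tool provided by amenability---destroys the CPC order zero structure of $\Psi$. To work around this, represent $\Psi$ by a sequence of CPC order zero maps $\Psi_n:B\to A$, enumerate $G=\{g_j\}_{j\in\N}$ and a dense subset $\{b_k\}\subset B$, and note that the defects $\|\alpha_{g_j}(\Psi_n(b_k))-\Psi_n(b_k)\|_{2,\TA}$ tend to $0$ along $\omega$ by the previous step. A diagonal reindexing along $\omega$ guided by a Følner sequence for $G$, combined with a Kirchberg $\varepsilon$-test applied to the finite list of conditions (CPC order zero, centrality in $A$, tracial fullness $1-\Psi(1_B)\in\Jtau$, and $g_j$-invariance for $j\le n$), extracts CPC order zero maps $\Psi'_n:B\to A$ whose ultraproduct $\Psi':B\to\Acentral$ satisfies all these conditions exactly. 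The induced $*$-homomorphism $\varphi:C_0((0,1])\otimes B\to\Acentral$ is then the required $G$-equivariant embedding.
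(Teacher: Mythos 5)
Your architecture matches the paper's: pass to $\pi_\tau(A)''\cong\cR$ via the $\alpha$-invariant extremal trace, obtain a unital copy of $\pi_{\tau_B}(B)''$ in the fixed-point algebra of the tracial central sequence algebra $\cM\cong\Acentral/(\Jtau\cap A')$ by Ocneanu, lift by Choi--Effros, correct to order zero, and finally enforce equivariance. The gap is in the last step. After the lift, the equivariance defects $\alpha_g(\Psi(b))-\Psi(b)$ lie in $\Jtau\cap A'$, i.e.\ they are small only in $\|\cdot\|_{2,\tau}$, and your reindexing/$\varepsilon$-test is supposed to produce \emph{exact} invariance in $\Acentral$, which is a statement about the \emph{operator} norm of the defect along $\omega$. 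No diagonal extraction converts $2$-norm smallness into operator-norm smallness, and the one tool you invoke that would give norm-small invariance defects --- F\o lner averaging of $\Psi$ itself --- is the one you correctly rule out because it destroys order zero. The missing device is the paper's Lemma \ref{LemQuasiCent}: an asymptotically $\alpha$-fixed quasicentral approximate unit $(\ulambda)$ of the $\alpha$-invariant ideal $\Jtau\cap A'$ in $\Acentral$ (here F\o lner averaging is harmless, since being a positive quasicentral approximate unit survives convex combinations). Compressing, $\varphiomegalambda(x)=(1-\ulambda)^{1/2}\varphiomega(x)(1-\ulambda)^{1/2}$, turns both the multiplicative defect and the $G$-defect into elements of the form $(1-\ulambda)^{1/2}j(1-\ulambda)^{1/2}+o(1)$ with $j\in\Jtau\cap A'$, which tend to $0$ in \emph{norm}, while preserving $1-\varphiomegalambda(1_B)\in\Jtau$; only after this compression does the diagonal reindexing legitimately deliver an exactly invariant order zero map.

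A secondary point: your primary justification for a unital copy of $\cR$ inside $(\cR^\omega\cap\cR')^{\overline\alpha}$ --- Dixmier/F\o lner averaging of central sequences assembled via McDuffness --- does not work as stated, since averaging destroys the matrix-unit relations one needs to retain. The route that works is the one you list as the alternative: quotient $G$ by the normal subgroup acting trivially on the central sequence algebra so that the induced action is centrally free, and then apply Ocneanu's theorem (\cite[Lemma 8.3]{Ocn}) to conclude that the fixed-point algebra is type II$_1$; this is exactly Theorem \ref{ThmMK}.
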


The embedding in the above proposition shall be constructed based on the following technique in the study of von Neumann algebras. This observation is contained in the proof of \cite[Theorem 3.1]{MK} as an application of Ocneanu's result \cite[Lemma 8.3]{Ocn}. For the reader's convenience, we revisit this argument here.

We let $\cR$ be the injective II$_1$-factor, and $\cM$ be the $\omega$-central sequence algebra of $\cR$ which is defined by 
\[ \cM=\left(\linfty (\N, \cR) /\left\{(x_n)_n\ |\ \limomega \tau_{\cR} (x_n^*x_n)=0\right\}\right)\cap \cR',\]
where $\tau_{\cR}$ is the unique trace on $\cR$ and $\cR$ is regarded as the subalgebra consisting of constant sequences. For an automorphism $\alpha$ on $\cR$, we denote the extension on $\cM$ by the same symbol $\alpha$.

\begin{theorem}\label{ThmMK}
Let $G$ be a countable discrete amenable group, and $\alpha$ an action of $G$ on the injective II$_1$-factor $\cR$. Then for any $k\in\N$ there exists a unital embedding of $M_k$ into the $\alpha$-fixed point subalgebra of $\cM$.
\end{theorem}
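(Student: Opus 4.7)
The plan is to combine amenability of $G$ with Ocneanu's Rohlin-type lemma for amenable group actions on $\cR$, together with the McDuff property of $\cR$.  The McDuff property already guarantees that every matrix algebra embeds unitally into $\cM$; the task is to arrange the image to lie in the $\alpha$-fixed point subalgebra.  The key input is that Ocneanu's Lemma 8.3 produces, for each finite $K\subset G$ and each $\varepsilon>0$, a system of projections in $\cM$ that behaves like a $(K,\varepsilon)$-Rohlin tower for the action of $G$.

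First, I would fix a F\o lner sequence $\{F_n\}_n$ in $G$ and apply Ocneanu's lemma to obtain, for each $n$, pairwise orthogonal projections $(p^{(n)}_g)_{g\in F_n}$ in $\cM$ with $\sum_g p^{(n)}_g = 1$ and $\lVert \alpha_h(p^{(n)}_g) - p^{(n)}_{hg}\rVert_2 < \delta_n$ whenever $g, hg\in F_n$, with $\delta_n\to 0$.  Since $\cM$ is a type II$_1$ factor (Ocneanu--McDuff for hyperfinite $\cR$), each corner $p^{(n)}_e \cM p^{(n)}_e$ is itself a II$_1$ factor and admits a unital embedding $\iota_0^{(n)}: M_k \to p^{(n)}_e \cM p^{(n)}_e$.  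I would then spread it over the tower by setting
\[ \iota_n: M_k \to \cM, \qquad \iota_n(x) = \sum_{g\in F_n} \alpha_g\bigl(\iota_0^{(n)}(x)\bigr), \]
which is a unital $*$-homomorphism because the summands have pairwise orthogonal supports sitting inside the $\alpha_g(p^{(n)}_e)$, approximately equal to the $p^{(n)}_g$.

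The F\o lner property then yields the approximate equivariance
\[ \bigl\lVert \alpha_h \iota_n(x) - \iota_n(x)\bigr\rVert_2 \;\leq\; 2\lVert x\rVert \bigl(|hF_n \triangle F_n|/|F_n|\bigr)^{1/2} + O(\delta_n) \;\longrightarrow\; 0 \]
for each fixed $h\in G$ and $x\in M_k$.  A standard diagonal/reindexation argument, using separability of $G$ and of a countable $\lVert\cdot\rVert_2$-dense subset of $M_k$, then upgrades the family $(\iota_n)_n$ to a single unital $*$-homomorphism $M_k\hookrightarrow \cM$ whose image is genuinely $\alpha$-fixed.

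The main obstacle is the first step: securing the Rohlin tower inside the central sequence algebra with the required approximate equivariance.  This is exactly the content of Ocneanu's Lemma 8.3, and its proof leans decisively on the hyperfiniteness of $\cR$ via a paving/tiling argument for amenable groups.  Once the tower is available, the averaging in the spreading step and the reindexation to produce a genuine $\alpha$-fixed embedding are routine manipulations in the $\lVert\cdot\rVert_2$-topology on $\cM$.
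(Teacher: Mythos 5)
Your overall strategy is the same as the paper's: everything is made to rest on Ocneanu's analysis of amenable group actions on $\cR$ and its central sequence algebra $\cM$. (The paper actually cites Ocneanu's Lemma 8.3 for the packaged statement that the fixed-point algebra of the induced action on $\cM$ is a type II$_1$ von Neumann algebra, which then contains $M_k$ unitally at once; your tower--spreading--reindexation argument is essentially an unpacking of how such a statement is proved, and the averaging and diagonalization steps you describe are indeed routine.)

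There is, however, a genuine gap at the very first step. Ocneanu's Rohlin theorem, and hence Lemma 8.3, applies only to actions that are free on the central sequence algebra --- i.e.\ centrally free (strongly outer) actions. An arbitrary action $\alpha$ of $G$ on $\cR$ need not have this property: any $g$ for which $\alpha_g$ is inner (or merely centrally trivial) acts as the identity on $\cM$, and for such $g$ no Rohlin tower of the kind you describe can exist. The extreme case of the trivial action already defeats your construction even though the theorem is then obvious: your spreading map would give $\iota_n(1)=\sum_{g\in F_n}\alpha_g\bigl(p_e^{(n)}\bigr)=|F_n|\,p_e^{(n)}\neq 1$, so $\iota_n$ is not unital, and the required approximate equivariance of the tower is incompatible with the orthogonality of the $p_g^{(n)}$ once the errors are measured on the correct (normalized) scale. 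The paper repairs exactly this point: it sets $K=\{g\in G\ :\ \alpha_g|_{\cM}=\mathrm{id}\}$, a normal subgroup, passes to the induced action $\talpha$ of the amenable quotient $H=G/K$ on $\cM$, invokes Ocneanu's Lemma 5.7 to see that each nontrivial $\talpha_{[g]}$ (being liftable to an automorphism of $\cR$ and nontrivial on $\cM$) is strongly outer on $\cM$, applies Lemma 8.3 to $\talpha$, and finally observes that the $\talpha$- and $\alpha$-fixed point subalgebras of $\cM$ coincide. You need this reduction (or an equivalent case analysis) before any Rohlin tower is available; with it in place, the remainder of your argument goes through.
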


\begin{proof}
Let $K$ be the normal subgroup of $G$ consisting of all $g\in G$ such that $\alpha_g(x)=x$ for any $x\in \cM$, and let $H=G/K$. Note that $H$ is also amenable. We define an action $\talpha$ of $H$ on $\cM$ by $\talpha_{[g]} (x) = \alpha_g (x)$ for $x\in \cM$, where $[g]\in H$ is the equivalence class of $g\in G$. Then it follows that $\talpha_{[g]}$ is non-trivial in $\cM$ for any $[g]\in H\setminus \{ 1\}$. 

Remark that for $[g]\in H\setminus \{1\}$, $\talpha_{[g]}$ is liftable as an automorphism on $\cR$ and then $\talpha_{[g]}$ is strongly outer in $\cM$, \cite[Lemma 5.7]{Ocn}. Applying \cite[Lemma 8.3]{Ocn} to $\talpha$ we see that the $\talpha$-fixed point subalgebra of $\cM$ is a type II$_1$ von Neumann algebra, which contains a matrix algebra $M_k$ unitally. The fixed point algebra of $\talpha$ in $\cM$ is nothing but of $\alpha$, then we obtain the desired unital embedding of $M_k$.  
\end{proof}

The following lemma is known to be generalized to non-amenable groups \cite[1.4]{Kas}. Here we give a proof of the easier case for amenable groups.  

\begin{lemma}\label{LemQuasiCent}
Let $G$ be a discrete amenable group, and $A$ a \Cs{} with an action $\alpha$ of $G$. For any $\alpha$-invariant ideal $I$ of $A$, there exists a quasicentral approximate identity $\ulambda\in I$, $\lambda\in \Lambda$ of $I$ in $A$ which is asymptotically $\alpha$-fixed: 
\[ \limlambda \left\lVert \alpha_g (\ulambda)- \ulambda \right\rVert =0\quad\forany g\in G.\]
\end{lemma}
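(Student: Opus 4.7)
The plan is to combine the classical quasicentral approximate identity theorem of Arveson with a Følner averaging argument coming from the amenability of $G$.

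First I would fix a standard quasicentral approximate identity $(\elambda)_{\lambda\in \Lambda}$ of $I$ in $A$, consisting of positive contractions, so that $\limlambda \|[\elambda, a]\|=0$ and $\limlambda\|\elambda a - a\|=0$ for all $a\in A$ and $a\in I$ respectively. Next, using amenability, I would fix a (left) Følner net $(F_n)_{n}$ of finite subsets of $G$, i.e.\ $|gF_n \triangle F_n|/|F_n|\to 0$ for every $g\in G$. Then I would define the averaged contractions
\[
u_{n,\lambda}=\frac{1}{|F_n|}\sum_{g\in F_n}\alpha_g(\elambda).
\]
Since $I$ is $\alpha$-invariant, $u_{n,\lambda}\in I$, and clearly $0\le u_{n,\lambda}\le 1$.

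Next I would verify the three desired properties separately. For asymptotic $\alpha$-invariance, for any fixed $g\in G$ a direct calculation gives
\[
\alpha_g(u_{n,\lambda})-u_{n,\lambda}=\frac{1}{|F_n|}\Bigl(\sum_{h\in gF_n}\alpha_h(\elambda)-\sum_{h\in F_n}\alpha_h(\elambda)\Bigr),
\]
whose norm is bounded by $|gF_n\triangle F_n|/|F_n|$, independently of $\lambda$; this tends to $0$ in $n$ by the Følner condition. For quasicentrality, I would use that $\alpha_g$ is isometric, so
\[
\|[\alpha_g(\elambda),a]\|=\|\alpha_g([\elambda,\alpha_{g^{-1}}(a)])\|=\|[\elambda,\alpha_{g^{-1}}(a)]\|\longrightarrow 0
\]
as $\lambda\to\infty$ for each fixed $g\in G$ and $a\in A$; averaging over the finite set $F_n$ gives $\|[u_{n,\lambda},a]\|\to 0$ in $\lambda$ for each fixed $n$. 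For the approximate-identity property, for $a\in I$ and fixed $g\in G$, the identity $\alpha_g(\elambda)a-a=\alpha_g(\elambda \alpha_{g^{-1}}(a)-\alpha_{g^{-1}}(a))$ and again the isometry of $\alpha_g$ show that $\|\alpha_g(\elambda)a-a\|\to 0$ in $\lambda$; averaging over $F_n$ again gives $\|u_{n,\lambda}a-a\|\to 0$ in $\lambda$ for fixed $n$.

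Finally I would reindex. Take the directed set $\Lambda'$ whose elements are triples $(F,E,\varepsilon)$ with $F\subset A$ finite, $E\subset G$ finite, and $\varepsilon>0$, ordered by inclusion and reverse order on $\varepsilon$. For each such $(F,E,\varepsilon)$, I first pick $n$ large enough that $|gF_n\triangle F_n|/|F_n|<\varepsilon$ for every $g\in E$, and then, holding this $n$ fixed, pick $\lambda$ large enough that $\|u_{n,\lambda}a-a\|<\varepsilon$ for $a\in F\cap I$ and $\|[u_{n,\lambda},a]\|<\varepsilon$ for $a\in F$; call the resulting element $u_{F,E,\varepsilon}$. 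By construction this net is a quasicentral approximate identity of $I$ in $A$ that is asymptotically $\alpha$-fixed on every finite subset of $G$. The proof has no real obstacle — the only point requiring care is the double limit, and this is handled by the standard reindexing as above; the key conceptual input is simply the Følner condition applied to the already averaging-friendly quasicentral approximate identity.
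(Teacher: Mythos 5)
Your proposal is correct and follows essentially the same route as the paper: average a standard quasicentral approximate identity over a Følner set, check the three properties using the isometry and $I$-invariance of the $\alpha_g$, and then reindex over finite data $(E,A_0,I_0,\varepsilon)$ to produce a single net. The only difference is that you spell out the verifications in slightly more detail than the paper does.
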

\begin{proof}
For any finite subset $E$ of $G$ and $\varepsilon >0$, we obtain a F\o lner set $F$ of $G$ which means 
\[ \max_{g\in E} \frac{|(gF\cup F) \setminus (gF\cap F)|}{|F|} < \varepsilon.\]
Let $\ulambda \in I$, $\lambda\in\Lambda'$ be a quasicentral approximate identity of $I$ in $A$. 
Set $\displaystyle \uEel = \frac{1}{|F|}\sum_{g\in F}\alpha_g (\ulambda)\in I$. Then $\uEel$, $\lambda\in \Lambda'$ is also a quasicentral approximate identity of $I$ which satisfies 
\[ \left\lVert \alpha_g(\uEel) -\uEel \right\rVert = \frac{1}{|F|} \left\lVert\sum_{h\in F} \alpha_{gh}(\ulambda) - \alpha_h(\ulambda)\right\rVert <\varepsilon,\]
for any $g\in E$ and $\lambda\in \Lambda'$. Then for finite subsets $A_0\subset A$ and $I_0\subset I$, there exists $\lambda_0\in\Lambda'$ such that $\left\lVert[u_{E,\varepsilon,\lambda_0}, a]\right\rVert <\varepsilon$, $\left\lVert u_{E,\varepsilon, \lambda_0} b - b \right\rVert<\varepsilon$ for any $a\in A_0$ and $b\in I_0$. Define $u_{E, A_0, I_0, \varepsilon}= u_{E, \varepsilon, \lambda_0}$. 
Regarding the set of all $(E, A_0, I_0, \varepsilon)$ as a net with the natural ordering, we see that $u_{E, A_0, I_0, \varepsilon}$ is a quasicentral approximate identity which is asymptotically $\alpha$-fixed.
\end{proof}

\begin{proof}[Proof of Proposition \ref{PropEmbedCone}]
We let $\pi_{\tau}$ be the GNS-representation associated with $\tau$. 
By $\tau\circ \alpha_g =\tau$ for any $g\in G$, we can consider the extension $\alphabar_g$ on $\pi_{\tau} (A)''$ which is determined by $\alphabar_g \circ\pi_{\tau}(a) = \pi_{\tau} \circ \alpha_g (a)$ for $a\in A$.  Because $\pi_{\tau} (A)''$ is isomorphic to $\cR$ (see \cite{Con}), we regard $\alphabar$ as an action of $G$ on $\cR$. Note that $\Jtau$ is an $\alpha$-invariant ideal of $\Aomega$ and that $\pi_{\tau}$ induces the isomorphism $\Phi : \Acentral / (\Jtau \cap A') \rightarrow \cM$. We denote by $\alpha'$ the canonical action on $\Acentral / (\Jtau \cap A')$ induced by $\alpha$. Since $\alphabar_g\circ\Phi = \Phi\circ {\alpha'}_g$, we identify $\alpha'$ with $\alphabar$ on $\cM$. By Theorem \ref{ThmMK}, we obtain a unital embedding $\Phi$ of $\cR$ into $\cM$ such that 
\[ \alphabar_g\circ\Phi (x) =\Phi (x)\quad \forany x\in \cR,\ g\in G.\]
 
Let $\tau_B$ be an extremal trace of $B$. Then we have a unital embedding $\Phi'$ of $\pi_{\tau_B}(B)'' (\cong \cR)$ into the $\alpha'$-fixed point subalgebra of  $\Acentral / (\Jtau\cap A') (\cong \cM)$.
By the Choi-Effros lifting theorem \cite{CH}, we obtain a unital completely positive map $\varphiomega : B \rightarrow \Acentral$ such that 
\[ \varphiomega (x) + (\Jtau \cap A') =\Phi'(x)\quad\forany x\in B.\]

By Lemma \ref{LemQuasiCent}, there exists a quasicentral approximate identity $\ulambda\in \Jtau\cap A'$, $\lambda\in\Lambda$ which is asymptotically $\alpha$-fixed. We define  a completely positive contraction $\varphiomegalambda$ by 
\[\varphiomegalambda (x) = (1-\ulambda)^{1/2}\varphiomega (x) (1-\ulambda)^{1/2}, \quad \lambda\in \Lambda,\ x\in B.\] 
Note that $1-\varphiomegalambda (1_B) \in \Jtau $ for any $\lambda\in\Lambda$.
Since $B$ is separable, and $G$ is countable, there exists an increasing sequence $\lambda_n\in \Lambda$, $n\in\N$ such that
\begin{align*}
&\limsup_{n\to\infty} \left\lVert \varphiomegalambdan (x)\cdot\varphiomegalambdan (y)\right\rVert \leq\left\lVert xy\right\rVert \quad\forany x, y\in B, \\
&\limn \left\lVert \alpha_g(\varphiomegalambdan(x)) -\varphiomegalambdan(x) \right\rVert =0 \quad \forany g\in G\ {\rm and \ } x\in B.
\end{align*}
Then the standard reindex argument allows us to get an order zero (disjointness preserving) completely positive contraction $\varphi$ from $B$ into $\Acentral$ such that $\alpha_g(\varphi(x)) =\varphi(x)$ for any $g\in G$ and $1-\varphi(1_B)\in \Jtau$. Because any order zero completely positive contraction can be identified with a $*$-homomorphism from the cone over the domain, see \cite[Proposition 1.2]{Win1}, we obtain the required embedding of $C_0((0,1])\otimes B$ into $\Acentral$ satisfying the required conditions. 
\end{proof}

The following Lemmas are $G$-equivariant versions of \cite[(ii) Lemma 4.2, Proposition 5.1]{Sat1}. In analogous ways, it is not so hard to check the detail. 
\begin{lemma}\label{LemOrthogonal}
Let $G$, $A$, $\alpha$ be as in Proposition \ref{PropEmbedCone}. Suppose that $\dextreme (\TA)$ is compact and $\alpha$ fixes any tracial state of $A$. Then, for any mutually orthogonal positive functions $f_i\in C(\dextreme (\TA)$, $i=1,2,...,N$, there exist sequences $(\ain)_n$, $i=1,2,...,N$ of positive elements in $A$ such that $(\ain)_n\in\Acentral$ for any $i=1,2,...,N$, $(\ain \ajn)_n=0$ in $\Acentral$ for $i\neq j$, and 
\[ \limomega\max_{\tau\in \dextreme (\TA) } \left\lvert \tau(\ain) - f_i(\tau)\right\rvert =0,\quad (\alpha_g(\ain))_n = (\ain)_n\ {\rm in}\ \Acentral\ \forany\ g\in G,\ i=1,2,...,N.\] 
\end{lemma}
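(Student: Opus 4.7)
The plan is to adapt the strategy of \cite[Lemma 4.2(ii)]{Sat1}, using Proposition \ref{PropEmbedCone} as the source of $\alpha$-fixed, tracially-large elements inside $\Acentral$. Fix $\varepsilon>0$; I will produce elements in $\Acentral$ realizing the estimate up to $\varepsilon$, and pass to a diagonal limit at the end. By compactness of $X:=\dextreme(\TA)$ and uniform continuity of the $f_i$, choose finitely many extremal traces $\tau_1,\dots,\tau_M\in X$, an open cover $\{U_k\}$ of $X$ with $\tau_k\in U_k$, and a continuous partition of unity $h_1,\dots,h_M\in C(X)$ subordinate to $\{U_k\}$, such that each $f_i$ has oscillation less than $\varepsilon$ on every $U_k$. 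Then pointwise orthogonality $f_if_j=0$ gives $f_i(\tau_k)f_j(\tau_k)=0$ for every $k$ and $i\neq j$. For each $k$, apply Proposition \ref{PropEmbedCone} with $B=\Js$ and trace $\tau_k$ to obtain an equivariant $*$-embedding $\varphi_k\colon C_0((0,1])\otimes\Js\to\Acentral$ with $1-\varphi_k(\id\otimes 1_\Js)\in J_{\tau_k}$. A standard reindex argument --- exploiting separability of $A$, countability of $G$, and the countable amount of data being tracked --- lets one further assume that the images of $\varphi_1,\dots,\varphi_M$ pairwise commute inside $\Acentral$.

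The central step is to extract from these commuting cones pairwise orthogonal $\alpha$-fixed positive contractions $p_1,\dots,p_M\in\Acentral$ with $\tau(p_k)\to h_k(\tau)$ uniformly in $\tau\in X$. Setting $e_k:=\varphi_k(\id\otimes 1_\Js)$ and using the commutativity, the successive cut-off
\[ p_k := e_k\prod_{l<k}(1-e_l) \]
gives $p_kp_{k'}=0$ for $k\neq k'$. The correct tracial values at the sample traces are then engineered by a further reindex exploiting disjointness of the GNS representations at distinct extremal $\tau_k,\tau_l$: this lets one arrange $\tau_l(e_k)\to 0$ for $l\neq k$ while $\tau_k(e_k)\to 1$, so that $\tau_l(p_k)\to \delta_{kl}\cdot h_k(\tau_k)$ after rescaling by cone elements $\varphi_k(g\otimes 1_\Js)$ with $g\in C_0((0,1])$, $g(1)=h_k(\tau_k)$. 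The uniform tracial estimate at an arbitrary $\tau\in X$ then follows from weak-$*$ continuity of $\tau\mapsto\tau(p_k)$ together with the sample traces $\{\tau_k\}$ being sufficiently dense for the chosen $\varepsilon$, refining the cover if necessary.

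With the $p_k$ in hand, set $a_i:=\sum_{k=1}^M f_i(\tau_k)\,p_k\in\Acentral$. The orthogonality $a_ia_j=0$ for $i\neq j$ is immediate from $f_i(\tau_k)f_j(\tau_k)=0$ combined with $p_kp_{k'}=0$ for $k\neq k'$; $\alpha$-fixedness is inherited from each $p_k$; and the uniform tracial estimate
\[ \max_{\tau\in X}\,|\tau(a_i)-f_i(\tau)|\lesssim \varepsilon \]
follows from $f_i(\tau)\approx\sum_k f_i(\tau_k)h_k(\tau)$ combined with $\tau(p_k)\approx h_k(\tau)$. A standard diagonal argument as $\varepsilon\to 0$ against countable dense data in $A$ and $C(X)$, and reindexing along $\omega$, produces positive sequences $(\ain)_n\subset A$ satisfying all the $\omega$-limit conditions of the lemma. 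The main obstacle I expect is the middle step: simultaneously producing orthogonality of the $p_k$ in $\Acentral$ and upgrading the tracial behaviour from the discrete sample $\{\tau_k\}$ to uniform control over all of $X$. This is precisely where compactness of $\dextreme(\TA)$ and disjointness of distinct extremal GNS representations (exploited via reindex) become indispensable.
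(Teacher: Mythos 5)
Your overall architecture (realize the tracial data by positive elements, then orthogonalize, then average over the group) has the right shape, but the middle step --- which you yourself flag as the main obstacle --- contains three genuine gaps. First, the successive cut-off $p_k:=e_k\prod_{l<k}(1-e_l)$ does not yield $p_kp_{k'}=0$: the elements $e_l=\varphi_l(\id\otimes 1)$ are positive contractions, not projections, so $e_k(1-e_k)\neq 0$, and for $k<k'$ the product $p_kp_{k'}$ contains exactly such a factor. Second, Proposition \ref{PropEmbedCone} controls $\varphi_k$ only at the single trace $\tau_k$, via $1-\varphi_k(\id\otimes 1)\in J_{\tau_k}$; nothing forces $\tau_l(e_k)$ to be small for $l\neq k$, and disjointness of the GNS representations of distinct extremal traces provides no such mechanism --- a positive contraction can have trace close to $1$ at every tracial state. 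Third, even granting $\tau_l(p_k)\approx\delta_{kl}h_k(\tau_k)$ at the finitely many sample traces, upgrading to the uniform estimate $\max_{\tau\in\dextreme(\TA)}\lvert\tau(a_i)-f_i(\tau)\rvert<\varepsilon$ would require a modulus of continuity for $\tau\mapsto\tau(p_k)$ independent of $n$; pointwise weak-$*$ continuity plus density of the sample set does not give this. That uniformity is precisely the nontrivial content of \cite[Proposition 4.1]{Sat1}, which rests on \cite[Lemma 4.4]{DT}.

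The paper's proof sidesteps all three issues. It quotes \cite[Proposition 4.1]{Sat1} to produce, in one stroke, positive elements $b_{i,n}$ with $\lVert b_{i,n}\rVert\le\lVert f_i\rVert$ and $\lim_{n\to\omega}\max_{\tau\in\dextreme(\TA)}\lvert\tau(b_{i,n})-f_i(\tau)\rvert=0$; it uses nuclearity (via \cite[Corollary 3.3]{Sat1}) to arrange $(b_{i,n})_n\in\Acentral$; it makes the sequences $\alpha$-fixed by F\o lner averaging as in Lemma \ref{LemQuasiCent} (this part matches your plan and is fine, since $\alpha$ fixes every trace); and it orthogonalizes at the end by the functional-calculus device from the proof of Proposition \ref{PropOuter}, cutting each $b_{i,n}$ by $1-g_m(\,\cdot\,)$ applied to the sum of the others, which produces genuinely orthogonal $a_{i,n}\le b_{i,n}$ while perturbing the traces by an amount that vanishes along $\omega$. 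To salvage your construction you would have to replace both the successive cut-off and the sample-point interpolation by these (or equivalent) devices; as written the proposal does not close.
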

\begin{proof}
Because of \cite[Proposition 4.1]{Sat1}, which is based on \cite[Lemma 4.4]{DT}, we obtain sequences $(\bin)_n$, $i=1,2,...,N$ of positive elements in $A$ such that $\|\bin\|\leq\|f_i\|$ and 
\[ \limomega \max_{\tau\in\dextreme(\TA)} \left\lvert \tau(\bin) -f_i(\tau)\right\rvert =0.\] 
Because $A$ is nuclear, we may assume that $(\bin)_n\in \Acentral$ (see \cite[Corollary 3.3]{Sat1} for example).

Since $\alpha$ fixes any $\tau\in\dextreme (\TA)$ and $G$ is a countable amenable group, using a similar method in the proof of Lemma \ref{LemQuasiCent}, we may further assume that $(\alpha_g(\bin))_n= (\bin)_n$ in $\Acentral$ for any $g\in G$ and $i=1,2,...,N$. 

By a similar argument in the proof of Proposition \ref{PropOuter}, we obtain sequences $(\ain)_n$, $i=1,2,...,N$ of positive elements in $A$ such that $(\ain)_n\in \Acentral$, $\ain\leq \bin$, $(\alpha_g(\ain))_n=(\ain)_n$ in $\Acentral$ for any $g\in G$, $i=1,2,...,N$, $(\ain\ajn)_n=0$ in $\Acentral$ for $i\neq j$, and 
\[\limomega\max_{\tau\in\dextreme(\TA)}\left\lvert\tau(\ain-\bin)\right\rvert =0.\]
\end{proof}

\begin{lemma}\label{LemEmbedMk}
Let $G$, $A$, $\alpha$ be as in Proposition \ref{PropEmbedCone}. Assume that $\dextreme(\TA)$ is compact and finite dimensional and $\alpha$ fixes any tracial state of $A$. Then for any $k\in \N$, there exists a unital embedding of $M_k$ into the $\alpha$-fixed point subalgebra of $\Atomega$.
\end{lemma}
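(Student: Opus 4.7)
The strategy mirrors \cite[Proposition~5.1]{Sat1}, upgraded with the $G$-equivariant tools from Sections~2--3. I would glue finitely many $\alpha$-fixed local embeddings of $M_k$ (given by Proposition~\ref{PropEmbedCone}) into a single embedding, using an $\alpha$-invariant orthogonal partition in $\Acentral$ (from Lemma~\ref{LemOrthogonal}) indexed by a partition of unity on $X := \dextreme(\TA)$ whose combinatorial complexity is controlled by $d := \dim X$.

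First, by a Lebesgue covering argument for the compact finite-dimensional space $X$, for any $\varepsilon > 0$ I fix a partition of unity $\{f_{i,j}\}_{0 \le i \le d,\, 1 \le j \le N_i} \subset C(X)$ with $\sum_{i,j} f_{i,j} = 1$, small-diameter supports, and the property that within each color $i$ the supports $\mathrm{supp}\,f_{i,j}$ are pairwise disjoint in $j$. For each $(i,j)$ I pick an extremal (hence $\alpha$-fixed) trace $\tau_{i,j} \in \mathrm{supp}\,f_{i,j}$. Proposition~\ref{PropEmbedCone} with $B = M_k$ and $\tau = \tau_{i,j}$ yields an $\alpha$-fixed order zero CP map $\psi_{i,j} : M_k \to \Acentral$ with $1 - \psi_{i,j}(1_k) \in J_{\tau_{i,j}}$. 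For each color $i$, Lemma~\ref{LemOrthogonal} applied to the pairwise orthogonal $\{f_{i,j}\}_j$ supplies $\alpha$-invariant pairwise orthogonal positive sequences $(a_{i,j,n})_n \in \Acentral$ with $\tau(a_{i,j,n}) \to f_{i,j}(\tau)$ uniformly in $\tau \in X$.

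A standard reindex argument lets me further assume $(a_{i,j,n})_n$ asymptotically commutes with every $\psi_{i',j'}(M_k)$ and with each $(a_{i',j',n})_n$. I then define
\[
\Phi(x) := \Bigl(\sum_{i,j} a_{i,j,n}^{1/2}\,\psi_{i,j}(x)_n\,a_{i,j,n}^{1/2}\Bigr)_n + (\Iomega \cap A'),\qquad x \in M_k.
\]
By construction $\Phi$ is $\alpha$-fixed and completely positive. Since any unital order zero CP map out of $M_k$ is automatically a $*$-homomorphism, once unitality and order zero are established $\Phi$ is the desired embedding by simplicity of $M_k$. The order zero property is checked term by term: diagonal contributions use the order zero of each $\psi_{i,j}$; same-color off-diagonal terms vanish by $a_{i,j,n}a_{i,j',n} = 0$; the cross-color off-diagonal contributions are controlled via the reindex commutation combined with the unitality input below.

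The main obstacle is unitality $1 - \Phi(1_k) \in \Iomega$, i.e., uniform vanishing of the defect on $\TA$. At an extremal trace $\tau \in X$, factoriality $\pi_\tau(A)'' \cong \cR$ forces every central sequence to be asymptotically scalar in $2$-norm, producing the decorrelation $\limomega \tau(x_n y_n) = \limomega \tau(x_n)\,\limomega \tau(y_n)$ for $(x_n),(y_n) \in \Acentral$. Hence
\[
\tau(\Phi(1_k)_n) \;\approx\; \sum_{i,j}\tau(\psi_{i,j}(1_k)_n)\cdot f_{i,j}(\tau).
\]
Proposition~\ref{PropEmbedCone} only gives $\tau_{i,j}(\psi_{i,j}(1_k)_n) \to 1$, and promoting this to $\tau(\psi_{i,j}(1_k)_n) \to 1$ uniformly on $\mathrm{supp}\,f_{i,j}$ requires a delicate ordering of choices: fix representative lifts of $\psi_{i,j}(1_k)$ first, then select the partition of unity with supports small enough in the weak-$*$ metric on $\TA$ calibrated to the (countably many) lifts, and only then perform the reindex selecting $n$. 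Since $\TA$ is a Bauer simplex ($X$ is compact) and $\tau \mapsto \tau(1 - \Phi(1_k)_n)$ is affine and continuous, uniform smallness on $X$ automatically lifts to $\Iomega$-smallness on $\TA$. Executing this sequencing carefully is the heart of the proof.
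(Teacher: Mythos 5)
Your skeleton is the right one and matches what the paper (via \cite[Proposition 5.1]{Sat1}) actually does: local $\alpha$-fixed order zero maps at extremal traces, a colored partition of unity on $\dextreme(\TA)$ with the number of colors controlled by the dimension, the orthogonal $\alpha$-invariant elements of Lemma \ref{LemOrthogonal}, and the correct observation that uniform $\|\cdot\|_{2,\tau}$-control on each patch must be obtained by fixing finite-stage lifts \emph{before} choosing the cover. (Two small repairs: Proposition \ref{PropEmbedCone} requires $B$ infinite dimensional, so you must take $B=M_{k^\infty}$ and restrict to a unital copy of $M_k$, as the paper does; and as written your order of choices is circular, since $\tau_{i,j}$ is picked from a partition that is supposed to be calibrated to the lifts of $\psi_{i,j}$ --- the fix is to build local maps at every $\tau$, extract neighborhoods by continuity of $\sigma\mapsto\|\cdot\|_{2,\sigma}$ at a fixed stage $n$, take a finite subcover, and only then refine to a colored partition subordinate to it.)

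The genuine gap is the gluing formula $\Phi(x)=\sum_{i,j}a_{i,j,n}^{1/2}\psi_{i,j}(x)_n a_{i,j,n}^{1/2}$ summed over \emph{all} colors at once. Within one color this is order zero (orthogonality of the $a_{i,j,n}$ in $j$ kills off-diagonal terms), but across colors nothing vanishes: for orthogonal $x\perp y$ in $M_k$ and $i\neq i'$, the term $a_{i,j}^{1/2}\psi_{i,j}(x)a_{i,j}^{1/2}a_{i',j'}^{1/2}\psi_{i',j'}(y)a_{i',j'}^{1/2}$ involves two \emph{different} local embeddings evaluated at a trace in the overlap of the supports, where both $\tau(a_{i,j})$ and $\tau(a_{i',j'})$ are bounded below; there is no mechanism making this small in $\Iomega$, and your appeal to ``reindex commutation combined with the unitality input'' does not supply one. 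This failure of the naive convex combination of local embeddings is precisely the hard point of \cite[Proposition 5.1]{Sat1} (and of the parallel results in \cite{TWW}, \cite{KR}); what one actually gets from your construction is $d+1$ order zero maps $\Phi_0,\dots,\Phi_d$ with $\sum_i\Phi_i(1_k)=1$ modulo $\Iomega$, and a further nontrivial argument is needed to convert these into a single unital embedding. Two of your supporting claims are also false as stated: $\pi_\tau(A)''\cong\cR$ has property $\Gamma$, so central sequences are \emph{not} asymptotically scalar in $2$-norm, and consequently the decorrelation identity $\limomega\tau(x_ny_n)=\limomega\tau(x_n)\cdot\limomega\tau(y_n)$ fails for two central sequences (take $x_n=y_n$ representing a trace-$\tfrac12$ projection in the central sequence algebra of $\cR$). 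The step it was meant to justify should instead be done by Cauchy--Schwarz against the uniform $2$-norm smallness of $1-\psi_{i,j}(1_k)_n$ on the patch, which is available once the order of choices is corrected.
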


\begin{proof}
For a natural number $k\in \N\setminus \{1\}$ and an $\alpha$-fixed  $\tau\in \dextreme (\TA)$, applying Proposition \ref{PropEmbedCone} to the UHF algebra $M_{k^{\infty}} (=B)$  we obtain a sequence of completely positive contractions $\varphitaun : M_k \rightarrow A$, $n\in\N$ satisfying the same conditions in the proof of \cite[Proposition 5.1]{Sat1}. Additionally, now these $\varphitaun$, $n\in\N$ satisfy
\[(\alpha_g(\varphitaun (x)))_n = (\varphitaun(x))_n\quad {\rm in \ } \Acentral,\]
for any $g\in G$ and $x\in M_k$. Then the same argument in the proof of \cite[Proposition 5.1]{Sat1} and Lemma \ref{LemOrthogonal} allows us to get the required embedding. 

\end{proof}

\begin{corollary}\label{CorEmbedding}
Let $G$, $A$, $\alpha$ be as in Proposition \ref{PropEmbedCone}. Assume that $\dextreme (\TA)$ is compact and finite dimensional and $\alpha$ fixes any tracial state of $A$. Then for any $k\in\N$ there exists an embedding $\varphi$ of $C_0((0,1])\otimes M_k$ into the $\alpha$-fixed point subalgebra of $A_{\omega}$ such that $1-\varphi(\id\otimes 1_k)\in \Iomega$.
\end{corollary}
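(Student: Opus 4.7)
The plan is to follow the strategy of Proposition \ref{PropEmbedCone}, but starting from the embedding of $M_k$ into the $\alpha$-fixed point subalgebra of $\Atomega = \Acentral/(\Iomega \cap A')$ supplied by Lemma \ref{LemEmbedMk}, and using $\Iomega$ in place of $\Jtau$ throughout.

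First I will apply the Choi--Effros lifting theorem, which is available since $M_k$ is nuclear, to lift the unital embedding $\Psi: M_k \to \Atomega$ from Lemma \ref{LemEmbedMk} to a unital completely positive map $\tilde\Psi: M_k \to \Acentral$ with $\tilde\Psi(x) + (\Iomega \cap A') = \Psi(x)$. Because $\Psi$ is a unital $*$-homomorphism whose image is fixed by $\alpha$, the three defects
\[\tilde\Psi(x)\tilde\Psi(y) - \tilde\Psi(xy),\qquad \alpha_g(\tilde\Psi(x)) - \tilde\Psi(x),\qquad 1-\tilde\Psi(1_k)\]
all lie in $\Iomega \cap A'$ for any $x,y \in M_k$ and $g \in G$.

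Next, by Lemma \ref{LemQuasiCent} applied to the $\alpha$-invariant ideal $\Iomega\cap A'$ of $\Acentral$, I will choose a quasicentral approximate unit $\ulambda \in \Iomega \cap A'$ that is asymptotically $\alpha$-fixed, and set
\[ \varphiomegalambda (x) = (1-\ulambda)^{1/2}\, \tilde\Psi(x)\, (1-\ulambda)^{1/2}, \qquad x\in M_k.\]
Quasicentrality of $\ulambda$ combined with the three defect properties above shows that, as $\lambda$ increases, $\varphiomegalambda$ becomes asymptotically order zero (i.e.\ $\limlambda \|\varphiomegalambda(x)\varphiomegalambda(y)\| = 0$ whenever $xy=0$, since one can slide $(1-\ulambda)$ past $\tilde\Psi(y)$ and then absorb $\tilde\Psi(x)\tilde\Psi(y) \in \Iomega \cap A'$ against $\ulambda$) and asymptotically $\alpha$-equivariant, while $1-\varphiomegalambda(1_k) \in \Iomega$ for every $\lambda$ because both $1-\tilde\Psi(1_k)$ and $\ulambda$ belong to $\Iomega$.

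The main technical step is then the standard reindex (diagonal) argument over $\lambda$, exploiting the countability of $G$ and the finite-dimensionality of $M_k$, to upgrade $(\varphiomegalambda)_\lambda$ to a genuine completely positive contractive order-zero map $\varphi_0: M_k \to \Acentral$ with $\alpha_g\circ\varphi_0 = \varphi_0$ for every $g \in G$ and $1-\varphi_0(1_k) \in \Iomega$. The subtlety here is to preserve simultaneously the central-sequence property, exact $\alpha$-equivariance, and the trace-uniform smallness of $1-\varphi_0(1_k)$ across the reindex; this is exactly the point where the parallel proof in Proposition \ref{PropEmbedCone} places its $\Jtau$-based reindex, and the same technique works verbatim with $\Iomega$ in place of $\Jtau$. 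Finally, by the Winter correspondence \cite[Proposition 1.2]{Win1} between completely positive contractive order-zero maps out of $M_k$ and $*$-homomorphisms out of $C_0((0,1])\otimes M_k$, the map $\varphi_0$ induces a $*$-homomorphism $\varphi: C_0((0,1])\otimes M_k \to \Acentral$ with $\varphi(\id\otimes 1_k) = \varphi_0(1_k)$, which automatically takes values in the $\alpha$-fixed point subalgebra and satisfies $1-\varphi(\id\otimes 1_k) \in \Iomega$, as required.
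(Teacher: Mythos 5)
Your proposal is correct and follows essentially the same route as the paper: lift the unital embedding of $M_k$ from Lemma \ref{LemEmbedMk} to a unital completely positive map into $\Acentral$, compress by $(1-\ulambda)^{1/2}$ for an asymptotically $\alpha$-fixed quasicentral approximate identity of $\Iomega\cap A'$ from Lemma \ref{LemQuasiCent}, reindex to an exactly $\alpha$-equivariant order zero map with $1-\varphi_0(1_k)\in\Iomega$, and invoke \cite[Proposition 1.2]{Win1}. (The only cosmetic difference is that for the finite-dimensional algebra $M_k$ the lifting is automatic, so the appeal to Choi--Effros is not strictly needed.)
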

\begin{proof}
Since $\Iomega$ is $\alpha$-invariant ideal, the proof follows from the same method of Proposition \ref{PropEmbedCone}. Actually, by Lemma \ref{LemEmbedMk} we obtain a unital completely positive contraction $\varphi$ from $M_k$ into $\Acentral$ such that $\varphi(x)\varphi(y) -\varphi(xy)\in \Iomega\cap A'$, $\alpha_g(\varphi(x)) -\varphi(x)\in \Iomega\cap A'$ for any $x, y\in M_k$, $g\in G$. Let $\ulambda$, $\lambda\in\Lambda$ be a quasi-central approximate identity of $\Iomega\cap A'$ in $\Acentral$ which is asymptotically $\alpha$-fixed. We define completely positive contraction $\varphilambda : M_k \rightarrow \Acentral$ by $\varphilambda (x) =(1-\ulambda)^{1/2} \varphi(x) (1-\ulambda)^{1/2}$ for $\lambda\in\Lambda$, $x\in M_k$. Then it follows that
\begin{align*}
&\lim_{\mu}\sup_{\mu\leq\lambda} \left\lVert\varphilambda(x)\varphilambda(y)\right\rVert \leq \left\lVert xy \right\rVert \quad\forany x, y\in M_k, \\
&\lim_{\lambda} \left\lVert\alpha_g(\varphilambda (x)) -\varphilambda(x) \right\rVert =0, \quad\forany g\in G, x\in M_k.
\end{align*}
Since $A$ and $M_k$ are separable, and $G$ is countable,  we can obtain an order zero completely positive contraction $\varphi : M_k\rightarrow\Acentral$ such that $\alpha_g(\varphi(x))=\varphi(x)$ for any $g\in G$, $x\in M_k$, and $1-\varphi(1_k)\in \Iomega$, which induces the embedding of $C_0((0,1])\otimes M_k$.  
\end{proof}

\section{ Property of tiny isometries}

This section is a continuation of our previous works \cite{MS3}, \cite{Sat1}, in which the absorption of the Jiang-Su algebra was shown by using a technical condition named property (SI). The property of small isometries (SI) means, loosely speaking, the existence of a sort of small isometries in the central sequence algebra, under the assumption of the strict comparison. In this section, we further promote the existence of much smaller elements in the following sense. 

In the rest of this paper, we denote by $\Apositiveone$ the set of all norm one positive elements in a \Cs\ $A$. For $m\in\N$, 
we let $h_m$ be the continuous function on $[0,1]$ defined by 
\[ h_m(t) =\max\left\{ 0,\ m \left(t-1+m^{-1}\right)\right\},\quad t\in[0,1].\]

\begin{definition}\label{DefTI}

Let $A$ be a separable \Cs\ with $\TA\neq \emptyset$. We say that $A$ has {\it property {\rm (TI)}}: if for any sequences $(e_n)_n$, $(f_n)_n$ of positive contractions in $A$ and for a countable dense subset $\{ a_k\}_{k\in\N}$ in $\Apositiveone$ satisfying $(e_n)_n$, $(f_n)_n\in \Acentral$ and 
\[ \limomega\ \frac{\max_{\tau\in \TA}\tau(e_n)}{\min_{\tau\in \TA} \tau\left(h_m\left(f_n^{1/2}a_kf_n^{1/2}\right)\right)}=0\quad \forany\ k, m\in \N,\]
there exists a sequence $(s_n)_n$ in $A$ such that $(s_n)_n\in \Acentral$, 
\[ (s_n^*s_n)_n=(e_n)_n,\quad (f_ns_n)_n=(s_n)_n\ {\rm in \ }\Acentral.\]
\end{definition}

\begin{remark} 
If $A$ is simple and $(e_n)_n\neq 0$ in $\Acentral$, then the above condition for $(e_n)_n$, $(f_n)_n$, $\{a_k\}_k$ automatically implies $\limomega\left\lVert f_n \right\rVert =1$. 
Actually, for any $k$, $m\in\N$, when $n$ is in a neighborhood of $\omega$ it follows that 
\[ 0<\max_{\tau\in\TA} \tau(e_n) < \min_{\tau\in\TA} \tau(h_m (f_n^{1/2} a_k f_n^{1/2})),\]
which implies $h_m(f_n^{1/2} a_k f_n^{1/2})\neq 0$, then $\left\lVert f_n^{1/2} a_k f_n^{1/2} \right\rVert \geq 1-m^{-1}$. 
\end{remark}

\begin{remark}
For a unital simple \Cs\ $A$, we notice that property (TI) is stronger than (SI) in \cite[Definition 4.1]{MS3}. Actually, if a sequence $(f_n)_n$ of positive contractions in $A$ satisfies $(f_n)_n\in \Acentral$ and 
\[ \lim_{m\to\infty} \limomega \min_{\tau\in \TA} \tau(f_n^m)>0,\]
then there exists a sequence $(\tfn)_n$ of positive contractions in $A$ such that $(\tfn f_n)_n=(\tfn)_n$, $(\tfn)_n\in \Acentral$, and $\limomega \min_{\tau} \tau(\tfn) >0$, (see \cite[Lemma 2.3.]{MS3} for example). Then, applying \cite[Lemma 2.4]{MS3} to $h_m(a_k)\neq 0$, we obtain $c>0$ such that 
\begin{align*}
\limomega\min_{\tau\in\TA} \tau (h_m(f_n a_k f_n)) &\geq \lim_n\min_{\tau} \tau(h_m(f_n a_k f_n)^{1/2} \tfn h_m(f_n a_k f_n)^{1/2}) \\
& \geq \lim_n\min_{\tau} \tau(h_m ( a_k )\tfn) \geq c \lim_n \min_{\tau} \tau(\tfn) >0. 
\end{align*}
Nevertheless, the next proposition shows that these two notions, (SI) and (TI), are equivalent and warrants (TI), under nuclearity and uniqueness of the tracial state. 
\end{remark}

\begin{proposition}\label{PropTI}
Let $A$ be a unital separable simple nuclear \Cs\ with compact finite dimensional $\dextreme (\TA)$. Then the following conditions are equivalent 
\begin{enumerate}
\item $A$ has strict comparison.
\item $A$ has property (SI) in the sense of \cite[Definition 4.1]{MS3}.
\item $A$ has property (TI).
\end{enumerate}
\end{proposition}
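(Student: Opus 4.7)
My plan is the cycle (i) $\Rightarrow$ (iii) $\Rightarrow$ (ii) $\Rightarrow$ (i). The implications (i) $\Leftrightarrow$ (ii) are established in the compact finite-dimensional $\dextreme(\TA)$ setting by \cite[Proposition 5.1]{Sat1} together with the arguments of \cite[Section 4]{MS3}, so the substantive new work is (i) $\Rightarrow$ (iii), while (iii) $\Rightarrow$ (ii) falls out of the preceding remark as follows. Given (SI) input $(e_n), (f_n)\in\Acentral$, the computation in the remark shows $\limomega \min_{\tau\in\TA} \tau(h_m(f_n^{1/2} a_k f_n^{1/2}))>0$ for every $k,m$, and a diagonal choice $m_n\to\infty$ satisfying $\limomega \max_\tau \tau(e_n^{m_n})=0$ (possible by the (SI) hypothesis on $(e_n)$) makes the pair $(e_n^{m_n}, f_n)$ satisfy the (TI) hypothesis. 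Property (TI) then supplies $(t_n)$ with $t_n^*t_n=e_n^{m_n}$ and $f_n t_n = t_n$, and setting $s_n = t_n g_\varepsilon(e_n)$ for a truncated power $g_\varepsilon(t)= t^{(1-m_n)/2}$ on $[\varepsilon,1]$ (extended continuously to zero at $0$) converts this into the (SI) witness $s_n^*s_n=e_n$, $f_n s_n=s_n$ after a diagonal limit $\varepsilon\to 0$.

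The heart of the proof is (i) $\Rightarrow$ (iii). Given (TI) input $(e_n),(f_n),\{a_k\}$, I set $b_{n,k}=f_n^{1/2} a_k f_n^{1/2}$ and $\tilde f_{n,k,m}=h_m(b_{n,k})$; note that $\tilde f_{n,k,m}$ is supported where $b_{n,k}\geq 1-1/m$, which together with $\|a_k\|\leq 1$ forces $f_n\geq 1-1/m$ on that support. Markov's inequality bounds $d_\tau((e_n-\delta)_+)\leq \tau(e_n)/\delta$, while $d_\tau(\tilde f_{n,k,m})\geq \tau(\tilde f_{n,k,m})$, and the (TI) hypothesis makes the ratio of the two tend to zero uniformly in $\tau\in\TA$; hence for $n$ sufficiently close to $\omega$ we have $d_\tau((e_n-\delta)_+)<d_\tau(\tilde f_{n,k,m})$ for every $\tau$. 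Since $A$ is nuclear, quasi-traces are traces, so strict comparison yields $(e_n-\delta)_+\preceq \tilde f_{n,k,m}$ in $A$; a bounded witness $r_n\in A$ with $\|r_n\|^2\leq 2$ is obtained by first realizing the subequivalence into the smaller cut $h_{2m}(b_{n,k})$, on whose support $\tilde f_{n,k,m}\geq 1/2$. Setting $s_n=\tilde f_{n,k,m}^{1/2} r_n$ then gives $s_n^* s_n \approx (e_n-\delta)_+$, and the support estimate $\tilde f_{n,k,m}^{1/2}(1-f_n)\tilde f_{n,k,m}^{1/2}\leq (1/m)\tilde f_{n,k,m}$ yields $\|f_n s_n-s_n\|\leq \sqrt 2/\sqrt m$; both errors tend to zero as $\delta\to 0$ and $m\to\infty$.

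The principal obstacle is to assemble these approximate witnesses into a single $(s_n)\in\Acentral$ realizing $s_n^*s_n=e_n$ and $f_n s_n=s_n$ exactly in $\Aomega$, simultaneously across all $k,m,\delta$, and to handle the case where $\dextreme(\TA)$ is compact and finite-dimensional but not a singleton. For the former I will perform a reindex-and-diagonalize over the countable families $\{a_k\}$, $m$, $\delta$, and a countable dense subset of $A$ (to force centrality), in the same spirit as the closing paragraph of the proof of Proposition \ref{PropEmbedCone}. For the latter I localize the construction on a partition of unity subordinate to a finite open cover of $\dextreme(\TA)$ whose mesh is controlled via the finite covering dimension, and glue the local witnesses by invoking Lemma \ref{LemOrthogonal}, paralleling the strategy of \cite[Proposition 5.1]{Sat1}.
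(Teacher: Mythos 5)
Your overall architecture agrees with the paper's: (i)$\Leftrightarrow$(ii) is quoted from the literature, (iii)$\Rightarrow$(ii) comes from the estimate in the remark preceding the proposition, and the new content is (i)$\Rightarrow$(iii), carried out by cutting $e_n$ down so that $d_\tau$ of the cut-down is dominated by $\tau(e_n)/\delta$ (your Markov step is the paper's Lemma \ref{LemCentral}(i), with $g_{r_n}(e_n)$ in place of $(e_n-\delta)_+$), comparing against $h_m(f_n^{1/2}a_kf_n^{1/2})$ via strict comparison, and using $(1-b)h_m(b)\le m^{-1}h_m(b)$ to force $f_ns_n\approx s_n$ (the paper's Lemma \ref{LemStrict}). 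Two small points before the main one. In (iii)$\Rightarrow$(ii) the detour through $e_n^{m_n}$ and the singular function $t^{(1-m_n)/2}$ is unnecessary and introduces its own problems (unbounded $g_\varepsilon(e_n)$, centrality of $(e_n^{m_n})_n$ for fast-growing $m_n$): the (TI) hypothesis constrains $\tau(e_n)$ itself, not its powers, so the pair $(e_n,f_n)$ from the (SI) hypotheses already satisfies the (TI) hypotheses once the denominators are bounded below, and (TI) hands you the (SI) witness directly. Also, your bounded-witness step compares into $h_{2m}$ but then evaluates $r_n^*h_m(\cdot)r_n$; the cleaner route is the paper's, where $r_n=h_m(b_n)^{1/2}r_n'$ is automatically bounded because $r_n^*r_n\approx e_n$.

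The genuine gap is centrality of the final witness. Strict comparison produces elements $r_n'$ with no approximate commutation properties whatsoever, and ``reindex-and-diagonalize over a countable dense subset of $A$ to force centrality'' is not an argument: diagonalization can only select properties the approximating sequences already possess, and yours do not approximately commute with anything. This is precisely where the paper needs Lemma \ref{LemCentral}(ii): using the cone embedding $\varphi\colon C_0((0,1])\otimes M_N\to\Acentral$ with $1-\varphi(\id\otimes 1_N)\in\Iomega$ (the trivial-group case of Corollary \ref{CorEmbedding}, and the place where compactness and finite dimensionality of $\dextreme(\TA)$ actually enter, via Lemmas \ref{LemOrthogonal} and \ref{LemEmbedMk} and \cite{Sat1}), the sequence $(e_n)_n$ is split into $N$ pairwise orthogonal \emph{central} pieces $(\ein)_n$, each still satisfying the ratio condition; one then runs the assembly of \cite[Theorem 1.1, (ii)$\Rightarrow$(iii)$\Rightarrow$(iv)]{MS3}, which converts $N$ non-central approximate witnesses supported on orthogonal central pieces into a single exactly isometric central witness. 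Your proposed partition-of-unity gluing over $\dextreme(\TA)$ is aimed at a different issue (it is essentially how Lemma \ref{LemOrthogonal} itself is proved) and does nothing toward commutation with $A$. Without an analogue of Lemma \ref{LemCentral}(ii) and the MS3 assembly step, the proof does not close.
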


The definition of strict comparison for positive elements was given in \cite{Ror}, based on Blackadar's notion for projections \cite{Bl}. For $\tau\in \TA$ we denote by $\dt$ the dimension function which is defined by $\dt(a) =\limn \tau( a^{1/n})$ for a positive element $a\in A\otimes M_k$, here $\tau$ is identified with a unnormalized trace on $A\otimes M_k$. We say that a separable \Cs\ $A$ has {\it strict comparison } if for two positive elements $a$, $b$ $\in A\otimes M_k$ with $\dt (a) < \dt (b)$ for any $\tau\in\TA$ there exists a sequence $(r_n)_n$ in $A\otimes M_k$ such that $\limn r_n^* b r_n =a$ in the operator norm topology.

We have seen (i)$\Longleftrightarrow $ (ii) in \cite{Sat1}, \cite{TWW}, \cite{KR}, then the remaining assertion we should prove is (i)$\Longrightarrow$(iii), which shall be given in a similar way as (i)$\Longrightarrow$ (ii). The main difference between (i)$\Longrightarrow$(ii) and (i)$\Longrightarrow$(iii) is the following lemmas.

\begin{lemma}\label{LemCentral}
Let $A$ be a separable \Cs\ with $\TA\neq \emptyset$, and $(e_n)_n$ a sequence of  positive contractions in $A$. Then the following holds: 
\begin{enumerate}
\item If $(e_n)_n$ and countable sequences $x_{m,n}>0$, $m, n \in\N$ satisfy
\[ \limomega \frac{\max_{\tau\in\TA}\tau(e_n)}{x_{m,n}} =0 \quad\forany m\in\N,\]
then there exists a sequence $(\te_n)_n$ of positive contractions in $A$ such that $(\te_n)_n=(e_n)_n$ in $\Aomega$ and 
\[\limomega \frac{\max_{\tau\in \TA} \dt (\te_n)}{x_{m,n}} =0\quad \forany m\in \N.\]
\item Assume that there exists a $*$-homomorphism $\varphi : C_0((0,1])\otimes M_N \rightarrow \Acentral$ such that $1-\varphi(\id \otimes 1_N) \in \Iomega$, and that a sequence $x_n>0$, $n\in\N$, a subset $\{a_k\}_{k\in\N}$ of $\Apositiveone$, and $(e_n)_n$ satisfy $(e_n)_n \in \Acentral$ and 
\[ \limomega \frac{x_n}{\min_{\tau\in\TA} \tau\left(h_m\left({e_n}^{1/2} a_k {e_n}^{1/2}\right)\right)}=0, \forany k, m\in\N.\]
Then there exist sequences $(\ein)_n$, $i=1,2,...,N$ of positive contractions in $A$ such that $(\ein)_n\in \Acentral$,  $\ein\leq e_n$ for any $n\in\N$, $(\ein\ejn)_n=0$ in $\Acentral$ for $i\neq j$, and 
\[ \limomega \frac{x_n}{\min_{\tau\in\TA} \tau\left(h_m\left(\ein^{1/2} a_k \ein^{1/2}\right)\right)}=0, \forany k, m\in\N\ {\rm and}\ i=1,2,...,N.\]
\end{enumerate}
\end{lemma}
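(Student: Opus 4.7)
The two parts are essentially independent. For part (i), I would set $\te_n := (e_n - \delta_n)_+$ for a null sequence $(\delta_n)$. The spectral projection $p_n := \chi_{(\delta_n, 1]}(e_n)$ supports $\te_n$ and satisfies $p_n \leq \delta_n^{-1} e_n$, whence $\dt(\te_n) = \tau(p_n) \leq \delta_n^{-1}\tau(e_n)$ for every $\tau$. A standard $\omega$-diagonal argument over the countable family of hypotheses produces $\delta_n$ with $\limomega \delta_n = 0$ and $\limomega \delta_n^{-1} \max_\tau \tau(e_n)/x_{m,n} = 0$ for each $m$, yielding both $(\te_n)_n = (e_n)_n$ in $\Aomega$ (since $\|\te_n - e_n\| \leq \delta_n$) and the required dimension-function bound.

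For part (ii), the strategy is to slice $e_n$ through the matrix-unit partition of unity. Set $v_i := \varphi(\id \otimes e_{ii}) \in \Acentral$, $i = 1,\dots, N$; these are mutually orthogonal positive contractions with $\sum_i v_i = \varphi(\id \otimes 1_N) \equiv 1 \pmod{\Iomega}$. Fix lifts $(v_{i,n})$ of $v_i$ and, more generally, $(w_{ij,n})$ of the full matrix-unit system $\varphi(\id \otimes e_{ij})$; by a standard $\varepsilon$-test reindexing, replace them by equivalent lifts for which additionally $\limomega \|[w_{ij,n}, e_n]\| = 0$, preserving positivity, contractivity, the approximate matrix relations, orthogonality in $\Aomega$, and the ultralimit tracial values. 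Define
\[ \ein := e_n^{1/2}\, v_{i,n}\, e_n^{1/2}.\]
Then $\ein \leq e_n$ since $v_{i,n} \leq 1$; $(\ein)_n \in \Acentral$ as a product of central sequences; and for $i \neq j$, writing $\ein \ejn = e_n^{1/2} v_{i,n} e_n v_{j,n} e_n^{1/2}$, passing $v_{j,n}$ through $e_n$ via the new commutator bound and using $\limomega \|v_{i,n} v_{j,n}\| = 0$ gives $\limomega \|\ein \ejn\| = 0$.

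For the trace condition, the reindexing ensures that $v_{i,n}$ approximately commutes with both $e_n^{1/2}$ and $a_k$, hence with $X_n := e_n^{1/2} a_k e_n^{1/2}$; consequently $\ein^{1/2} a_k \ein^{1/2} \approx v_{i,n} X_n$ in $\Aomega$. The elementary pointwise bound $h_m(\lambda\mu) \geq \tfrac12 h_{4m}(\lambda) h_{4m}(\mu)$ on $[0,1]^2$, applied via commuting functional calculus, yields
\[ h_m\bigl(\ein^{1/2} a_k \ein^{1/2}\bigr) \geq \tfrac12\, h_{4m}(v_{i,n})\, h_{4m}(X_n)\]
up to a norm-$o(1)$ error along $\omega$. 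In the quotient $\Atomega$, $\bar\varphi$ descends to a unital embedding $M_N \hookrightarrow \Atomega$ which, after reindexing, commutes with the image of $(X_n)$. The trace-preserving conditional expectation onto $\bar\varphi(M_N)$ together with uniqueness of the trace on $M_N$ yields, for each $\tau \in \dextreme(\TA)$,
\[ \limomega \tau\bigl(h_{4m}(v_{i,n})\, h_{4m}(X_n)\bigr) = \frac{1}{N}\, \limomega \tau(h_{4m}(X_n)),\]
uniformly over $\dextreme(\TA)$ by the Bauer-type machinery of Lemma \ref{LemOrthogonal}. Since $\tau \mapsto \tau(Y)$ is affine, $\min_{\tau \in \TA}$ coincides with $\min_{\tau \in \dextreme(\TA)}$; the hypothesis applied at parameter $4m$ then closes the estimate.

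The principal obstacle is the coupled reindexing in (ii): forcing the whole matrix-unit system $\{(w_{ij,n})\}$ to commute with the single external sequence $(e_n)$ while simultaneously preserving the matrix relations, the $\Aomega$-orthogonality, positivity/contractivity, and the tracial values $\tau(v_{i,n}) \to 1/N$ needed for the factorization. Propagating the tracial factorization uniformly over $\dextreme(\TA)$ is the secondary technical point, handled by the compactness and finite-dimensionality hypotheses already threaded through Lemmas \ref{LemOrthogonal} and \ref{LemEmbedMk}.
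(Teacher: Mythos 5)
Part (i) of your proposal is correct and coincides with the paper's argument: the paper takes $\te_n=g_{r_n}(e_n)$ with $g_r(t)=\max\{0,t-r\}$, which is your $(e_n-\delta_n)_+$, together with the same support-projection bound $\dt(\te_n)\le r_n^{-1}\max_\tau\tau(e_n)$ and the same diagonal choice of the cut-off parameters.

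For part (ii) your slicing $\ein=e_n^{1/2}v_{i,n}e_n^{1/2}$ by lifts of $\varphi(\id\otimes e_{ii})$ is also the paper's choice, and the claims $\ein\le e_n$, centrality, and mutual orthogonality go through. The gap is in the closing trace estimate. Everything you actually establish there is an \emph{absolute} statement along $\omega$: the factorization $\limomega\max_{\tau}\bigl|\tau\bigl(h_{4m}(v_{i,n})h_{4m}(X_n)\bigr)-\tfrac1N\tau\bigl(h_{4m}(X_n)\bigr)\bigr|=0$ and the ``norm-$o(1)$'' error in $h_m\bigl(\ein^{1/2}a_k\ein^{1/2}\bigr)\ge\tfrac12h_{4m}(v_{i,n})h_{4m}(X_n)$. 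But both the hypothesis and the conclusion are \emph{ratio} conditions, and nothing in the lemma prevents $\min_{\tau\in\TA}\tau\bigl(h_{4m}(e_n^{1/2}a_ke_n^{1/2})\bigr)$ from tending to $0$ along $\omega$ --- only its ratio with $x_n$ is controlled (think $x_n=n^{-2}$ against a denominator of order $n^{-1}$). In that regime an additive $o(1)$ error swallows the whole quantity $\tfrac1{2N}\min_\tau\tau(h_{4m}(X_n))$ and the final ratio need not converge to $0$. This is exactly why the paper keeps a second index: it lifts $\varphi$ to maps $\varphi_l$, proves for each \emph{fixed} $n$ the exact limit $\lim_{l\to\omega}\min_\tau\tau\bigl(h_m\bigl(\einl^{1/2}a_k\einl^{1/2}\bigr)\bigr)=\tfrac1N\min_\tau\tau\bigl(h_m\bigl(e_n^{1/2}a_ke_n^{1/2}\bigr)\bigr)$ --- a limit onto a fixed positive target for $n$ in the relevant $\Omega_L$ --- and only then diagonalizes, choosing $l_n$ so that the ratio itself is $<1/L$. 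Your $\varepsilon$-test reindexing would repair this only if the quantities fed into the test are the ratios themselves, not the ``ultralimit tracial values,'' which may all be zero.

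Two smaller points. The paper obtains the exact relation $h_m\bigl(\einl^{1/2}a_k\einl^{1/2}\bigr)\equiv h_m\bigl(e_n^{1/2}a_ke_n^{1/2}\bigr)\fil$ modulo $\Iomega$ because $\fil$ is a projection mod $\Iomega$; your scalar inequality $h_m(\lambda\mu)\ge\tfrac12h_{4m}(\lambda)h_{4m}(\mu)$ is a correct but lossier substitute, harmless only because the hypothesis quantifies over all $m$. More seriously, the appeal to compactness and finite-dimensionality of $\dextreme(\TA)$ and to Lemmas \ref{LemOrthogonal} and \ref{LemEmbedMk} is out of bounds here: Lemma \ref{LemCentral} assumes only $\TA\neq\emptyset$, and the paper's symmetrization over $i=1,\dots,N$ needs nothing beyond traciality and $\Iomega$-membership, which already yield estimates uniform over all of $\TA$.
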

\begin{proof}
(i) By a diagonal argument, for $N\in\N$ it is enough to obtain $\te_n$ such that $(\te_n)_n =(e_n)_n$ in $\Aomega$ and 
\[ \limomega \max_{m=1,2,...,N}\frac{\max_{\tau\in \TA} \dt (\te_n)}{x_{m,n}}=0.\] 
Put $\displaystyle \varepsilon_n = \max_{\tau\in \TA} \tau(e_n)$ and $\displaystyle\delta_n =\min_{m=1,2,...,N} x_{m,n}>0$. Since $\varepsilon_n /\delta_n \to 0$, $n\to \omega$, there exists a sequence $r_n >0$, $n\in\N$ such that $r_n\to 0$ and $\varepsilon_n/(\delta_n r_n)\to 0$, $n\to\omega$. For $r>0$, we let $g_r$ be the continuous function defined by $g_r(t) =\max \{0, t-r\}$, $t\in [0,1]$. Then it follows that 
\[\frac{\max_{\tau\in \TA} \dt (g_{r_n} (e_n))}{\delta_n} \leq \frac{\varepsilon_n}{\delta_nr_n} \to 0,\quad n\to\omega.\]
Then a sequence $\te_n = g_{r_n}(e_n)$ for $n\in\N$ satisfies the desired conditions.

(ii) Let $\varphi_l : C_0((0,1])\otimes M_N \rightarrow A$, $l\in\N$ be a sequence of positive contractions such that $(\varphi_l( \id^q\otimes x))_l = \varphi(\id^q\otimes x)$ in $\Acentral$ for any $q\in\N$, $x\in M_N$. We let $\eij$ be the standard matrix units of $M_N$, and set
\[ \fil = \varphi_l (\id\otimes \eii),\quad \einl = e_n^{1/2} f_{i,l} e_n^{1/2}, \quad {\rm for \ } i=1,2,...,N,\quad l, n\in \N.\]
Since $1-\varphi(\id\otimes 1_N)\in \Iomega$, it follows that 
$ (\fil -\fil^2)_l = (\varphi_l (\id\otimes \eii)(1-\varphi_l (\id\otimes 1_N)))_l \in \Iomega$. Then for any $k$, $m$, $n\in\N$ and $i=1,2,...,N$, we have 
\[ \left(h_m\left(e_n^{1/2} a_k e_n^{1/2}\right) \fil - h_m \left(\einl^{1/2} a_k \einl^{1/2}\right)\right)_l\in\Iomega.\]
Because 
\[ \lim_{l\to\omega}\max_{\tau\in\TA} \left|\tau\left(h_m \left(e_n^{1/2}a_ke_n^{1/2}\right)\left(f_{1,l} -\fil\right)\right)\right| =\lim_{l\to\omega}\max_{\tau}\left|\tau\left(h_m\left( e_n^{1/2} a_k e_n^{1/2}\right)\left(f_{1,l}^2 - \fil^2\right)\right)\right| =0, \]
for any $k$, $m$, $n\in\N$, $i=1,2,...,N$ it follows that 
\[ \lim_{l\to\omega} \min_{\tau\in \TA} \tau \left(h_m \left( \einl^{1/2} a_k \einl^{1/2}\right)\right) =\frac{1}{N} \min_{\tau} \tau\left( h_m (e_n^{1/2} a_k e_n^{1/2})\right).\]

By the assumption in (ii), we inductively obtain a neighborhood $\Omega_L\in\omega$ such that $\Omega_{L+1} \subset \Omega_L$, $\bigcap_{L\in\N}\Omega_L=\emptyset$, and for $n\in\Omega_L$ and $k$, $m\in\{1,2,...,L\}$, 
\[\frac{x_n}{\min_{\tau\in\TA} \tau\left(h_m\left(e_n^{1/2} a_k e_n^{1/2}\right)\right)} < \frac{1}{LN}.\] 
Let $\{b_j\}_{j\in\N}$ be a dense subset of $A$. By the above estimation, for $n\in\Omega_L\setminus \Omega_{L+1}$, we can find $l_n\in\N$ such that $\left\lVert [\filn, e_n] \right\rVert < 1/L$,  $\left\lVert [ \filn, b_j] \right\rVert < 1/L$ for any $i=1,2,...,N$, $j=1,2,...,L$, $\left\lVert\filn \fjln\right\rVert < 1/L$ for $i\neq j$, and  
\[ \frac{x_n}{\min_{\tau}\tau\left( h_m\left( \einln^{1/2} a_k \einln^{1/2}\right)\right)} <\frac{1}{L}\ \forany k, m\in\{1,2,...,L\}\ {\rm and\ } i=1,2,...,N.\]
Define $\ein = \einln$, $n\in\N$. These $(\ein)_n$, $i=1,2,...,N$ satisfy the desired conditions.
\end{proof}

\begin{lemma}\label{LemStrict}
Let $A$ be a unital separable simple \Cs\ with $\TA\neq\emptyset$. Suppose that $A$ has strict comparison and $e_n$, $f_n\in A$, $n\in\N$ and $\{a_k\}_{k\in\N}\subset \Apositiveone$ satisfy the conditions in Definition \ref{DefTI}. Then for any norm one element $a\in A$ there exists a sequence $(r_n)_n$ in $A$ such that 
\[ (r_n^* f_n^{1/2} af_n^{1/2} r_n)_n = (e_n)_n = (r_n^*r_n)_n\quad {\rm in\ }\Aomega.\]
\end{lemma}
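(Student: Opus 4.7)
My plan is to build $r_n$ via a strict-comparison plus R\o rdam construction applied to the ``flat-top'' element $h_m(f_n^{1/2} a_k f_n^{1/2})$ for a suitable $a_k$ close to $a$, which localizes $f_n^{1/2} a_k f_n^{1/2}$ to its top spectral set where it acts nearly as the identity. I carry the argument out for $a$ a positive norm-one contraction in $A$, which is the case relevant to Definition~\ref{DefTI} (note that for, e.g., $a=-1$ the conclusion would force $(e_n)_n=0$, so positivity of $a$ is implicitly required). The first move is to apply Lemma~\ref{LemCentral}(i), along a diagonal enumeration of $(k,m)\in\N\times\N$, with $x_{m,n}=\min_{\tau\in\TA}\tau(h_m(f_n^{1/2} a_k f_n^{1/2}))$, so as to obtain $(\tilde e_n)_n=(e_n)_n$ in $\Aomega$ with
\[
\limomega \frac{\max_{\tau\in\TA}\dt(\tilde e_n)}{\min_{\tau\in\TA}\tau(h_m(f_n^{1/2} a_k f_n^{1/2}))}=0 \quad \text{for all } k,m\in\N.
\]

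Next, fix $\varepsilon,\delta>0$, choose $k$ with $\|a-a_k\|<\varepsilon$ and $m$ with $1/m<\varepsilon$, and set $b_n^{(k)}:=f_n^{1/2} a_k f_n^{1/2}$. The shrinking step implies, for $n$ in a neighborhood of $\omega$, $\max_\tau \dt(\tilde e_n)<\min_\tau \dt(h_m(b_n^{(k)}))$; strict comparison then yields the Cuntz subequivalence $\tilde e_n\precsim h_m(b_n^{(k)})$, and R\o rdam's lemma supplies $v_n\in A$ with $v_n^* h_m(b_n^{(k)}) v_n=(\tilde e_n-\delta)_+$. Define $r_n:=h_m(b_n^{(k)})^{1/2} v_n\in A$; then $r_n^* r_n=(\tilde e_n-\delta)_+$. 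Since $b_n^{(k)}$ commutes with $h_m(b_n^{(k)})$ and $0\leq (1-t)h_m(t)\leq (1/m)h_m(t)$ on $[0,1]$, one obtains the operator inequality
\[
0\leq v_n^*\bigl(h_m(b_n^{(k)}) - b_n^{(k)} h_m(b_n^{(k)})\bigr) v_n \leq \tfrac{1}{m}(\tilde e_n-\delta)_+,
\]
whence $\|r_n^* b_n^{(k)} r_n - r_n^* r_n\|\leq 1/m$. Using $\|r_n\|^2\leq 1$ and $\|f_n^{1/2} a f_n^{1/2} - b_n^{(k)}\|\leq \|a-a_k\|<\varepsilon$, one concludes that
\[
\limsup_{n\to\omega}\max\bigl\{\|r_n^* f_n^{1/2} a f_n^{1/2} r_n - e_n\|,\ \|r_n^* r_n - e_n\|\bigr\}\leq \varepsilon+1/m+\delta.
\]

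A standard diagonal reindex argument, sending $\varepsilon,\delta,1/m\to 0$ along nested neighborhoods of $\omega$, then extracts a single sequence $(r_n)_n$ in $A$ for which both required equalities hold exactly in $\Aomega$. The main obstacle is to bound $\|r_n^* b_n^{(k)} r_n - r_n^* r_n\|$ without an a priori control on $\|v_n\|$, which R\o rdam's lemma does not supply; the displayed operator inequality resolves this by absorbing the offending $\|v_n\|^2$ factor into the explicit norm-one quantity $(\tilde e_n-\delta)_+$. This is precisely the feature that replaces the role of $(f_n^m)$ in property (SI) with the sharper cut-off $h_m(f_n^{1/2} a_k f_n^{1/2})$ demanded by (TI).
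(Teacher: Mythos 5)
Your proposal is correct and follows essentially the same route as the paper: reduce to $a$ in the dense set $\{a_k\}$, upgrade $\tau(e_n)$ to $\dt(\te_n)$ via Lemma \ref{LemCentral}(i), apply strict comparison to $h_m(f_n^{1/2}a_kf_n^{1/2})$, and absorb the unbounded comparison factor by setting $r_n=h_m(b_n)^{1/2}v_n$ together with the inequality $(1-t)h_m(t)\leq m^{-1}h_m(t)$. The only cosmetic differences are that you invoke R\o rdam's $(\,\cdot\,-\delta)_+$ form of Cuntz subequivalence where the paper uses its sequential definition of strict comparison directly (so it keeps $(r_n^*r_n)_n=(e_n)_n$ exact at each stage), and your explicit remark that $a$ is taken positive, which the paper leaves implicit.
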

\begin{proof}
We may assume that $(e_n)_n \neq 0$ in $\Acentral$. 
For $\varepsilon>0$, it suffices to find $r_n\in A$, $n\in\N$ such that  
\[ \limomega \left\lVert r_n^* f_n^{1/2} af_n^{1/2}r_n -e_n\right\rVert <\varepsilon\quad{\rm and\ } (r_n^*r_n)_n =(e_n)_n\ {\rm in\ }\Aomega.\]
In order to show this, taking a slight perturbation of $a\in A$ we may further assume that $a$ is contained in the given dense subset $\{a_k\}_{k\in\N}$. Now, by the conditions of $e_n$, $f_n$, $\{a_k\}_k$, it follows that 
\[ \limomega\frac{\max_{\tau\in\TA} \tau(e_n)}{\min_{\tau\in\TA} \tau(h_m(f_n^{1/2}af_n^{1/2}))} =0\quad\forany m\in\N.\]
We set $b_n = f_n^{1/2} a f_n^{1/2}$, $n\in\N$.

Since $A$ is simple and $(f_n)_n\in \Acentral$, we see that $\limomega \left\lVert b_n \right\rVert = \limomega \|a\|\cdot\|f_n\|=1$. Then for each $m\in\N$, there is a sequence $(x_{m,n})_n$, $m\in\N$ such that $x_{m,n} >0$ for any $n\in\N$, and $x_{m,n}= \min_{\tau\in\TA} \tau( h_m(b_n))$ for $n$ in a neighborhood of $\omega$. 
Applying Lemma \ref{LemCentral} (i) to $(e_n)_n$ and $x_{m,n}$, we may further assume that 
\[\limomega\frac{\max_{\tau\in \TA} \dt(e_n)}{\min_{\tau\in\TA} \tau(h_m(b_n))}=0\quad\forany m\in\N.\]
This implies that 
\[ \max_{\tau\in\TA} \dt (e_n) < \min_{\tau\in\TA} \tau (h_m (b_n))\leq \min_{\tau\in \TA} \dt (h_m (b_n)),\]
in a neighborhood of $\omega$. 

Let $m\in\N$ be such that $1/m <\varepsilon$. 
 Because of strict comparison of $A$, there exists a sequence $r_n'\in A$, $n\in\N$ such that $(r_n'^* h_m(b_n) r_n')_n =(e_n)_n$ in $\Aomega$. Remark that $\|r_n'\|$ is not necessary bounded. We define $r_n = h_m(b_n)^{1/2} r_n'$, $n\in\N$   which satisfy the desired conditions. Actually, by the definition of $h_m$ it follows that 
\[ (1-b_n) h_m(b_n)\leq \frac{1}{m} h_m (b_n),\] 
which implies 
\[\limomega \left\lVert r_n^*b_n r_n -r_n^*r_n \right\rVert =\limomega\left\lVert r_n'^* h_m(b_n)(1-b_n) r_n' \right\rVert \leq \limomega \frac{1}{m} \left\lVert r_n^*r_n\right\rVert<\varepsilon.\]

\end{proof}

\begin{proof}[Proof of (i) $\Longrightarrow$ (iii) in Proposition \ref{PropTI}]
Note that by the condition of $r_n$ in Lemma \ref{LemStrict}, it automatically follows that $(f_nr_n)_n =(r_n)_n$ in $\Aomega$. Thus, one can follow the argument in the proof of $(ii)\Rightarrow (iii) \Rightarrow (iv) $ of Theorem 1.1 in \cite{MS3}, replacing \cite[Lemma 3.4, Lemma 2.5]{MS3} by  Lemma \ref{LemCentral} (ii) and Lemma \ref{LemStrict}. We omit the detail because the argument is exactly the same.

\end{proof}

\section{$\Js$-absorption of crossed products}
In this section, we prove the main result Theorem \ref{ThmMain}. Although a similar argument can be found in the proof of \cite[Proposition 4.5]{MS2}, (see also \cite[Proposition 4.2]{Sat0}), the main difference in this paper is a small Rohlin tower and property (TI) in the following proposition. 

\begin{proposition}\label{PropSI}
Let $G$ be a countable discrete amenable group,  $A$ a unital separable simple \Cs\ with $\TA \neq\emptyset$, and $\alpha$ an action of $G$ on $A$. If $A$ has property (TI) then the following $\alpha$-equivariant property (SI) holds: 
if sequences $(e_n)_n$ and $(f_n)_n$ of positive contractions in $A$ satisfy $(e_n)_n$, $(f_n)_n\in \Acentral$, $(\alpha_g(e_n))_n =(e_n)_n$, $(\alpha_g(f_n))_n=(f_n)_n$ in $\Acentral$ for any $g\in G$, $(f_n-f_n^2)_n \in \Iomega$, and 
\[\limomega \max_{\tau\in \TA} \tau(e_n) =0,\quad \limomega\min_{\tau\in \TA}\tau (f_n) >0,\]
then there exists a sequence $(s_n)_n$ in $A$ such that $(s_n)_n\in \Acentral$, $(\alpha_g(s_n))_n= (s_n)_n$ for any $g\in G$, and
\[ (s_n^*s_n)_n =(e_n)_n,\quad (f_ns_n)_n=(s_n)_n\ {\rm in\ }\Acentral.\]
\end{proposition}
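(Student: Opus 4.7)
The plan is to first apply property (TI) to obtain a non-equivariant candidate and then upgrade to an $\alpha$-fixed sequence via a F\o lner averaging supported by the small Rohlin tower from Proposition~\ref{PropOuter}. A standard reindex (diagonal) argument reduces the statement to producing, for each finite $F \subset G$ and every $\varepsilon > 0$, a central sequence $(s_n^{F,\varepsilon})_n$ approximately satisfying the two required identities and with $\|(\alpha_g(s_n^{F,\varepsilon}) - s_n^{F,\varepsilon})_n\| < \varepsilon$ in $\Acentral$ for every $g \in F$.

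First I would verify the hypothesis of (TI) for $(e_n), (f_n)$. Since $(f_n-f_n^2)_n \in \Iomega$ and $\lim_{n\to\omega}\min_\tau \tau(f_n)>0$, we deduce $\lim_{m\to\infty}\lim_{n\to\omega}\min_\tau \tau(f_n^m) > 0$. The computation in the Remark following Definition~\ref{DefTI} then yields $\lim_{n\to\omega}\min_\tau \tau(h_m(f_n^{1/2} a_k f_n^{1/2})) > 0$ for every $k, m \in \N$. Combined with $\lim_{n\to\omega}\max_\tau\tau(e_n) = 0$, this verifies the (TI) ratio condition, and (TI) produces $(s_n')_n \in \Acentral$ with $(s_n'^* s_n')_n = (e_n)_n$ and $(f_n s_n')_n = (s_n')_n$, though not yet $\alpha$-fixed.

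To equivariantize, let $K \trianglelefteq G$ be the normal subgroup of those $g$ for which $\alpha_g$ is inner on $A$; the effective action of the amenable quotient $G/K$ is outer on $A$. For $F, \varepsilon$ as above, choose a F\o lner set $\overline{F'} \subset G/K$ for $F$ and lift it to $F' \subset G$ injecting into $G/K$, so that $\alpha_{g^{-1}h}$ is outer for distinct $g, h \in F'$. Applying Proposition~\ref{PropOuter} to the countable family $\{\alpha_{g^{-1}h} : g \neq h\}$ and passing to a diagonal subsequence yields $(p_n)_n \in \Acentral$ satisfying $\|\alpha_g(p_n)\alpha_h(p_n)\| \to 0$ for distinct $g, h \in F'$, i.e., the translates $\alpha_g(p_n)$ are pairwise orthogonal in $\Acentral$. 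Applying (TI) once more to the corner-rescaled data $(|F'|^{-1} p_n e_n p_n, f_n)$ produces $(t_n)_n \in \Acentral$ with support essentially inside $p_n$, and setting $s_n^{F,\varepsilon} := \sum_{g \in F'}\alpha_g(t_n)$ makes the off-diagonal terms of $s_n^{F,\varepsilon*} s_n^{F,\varepsilon}$ vanish by orthogonality of supports (using that $p_n$ is approximately a central projection and asymptotically commutes with elements of $A$), while the F\o lner condition supplies the approximate $\alpha_F$-invariance with an error of order $\sqrt{\varepsilon}$.

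The hardest step will be ensuring that $\sum_{g \in F'}\alpha_g(p_n)$ dominates the support of $(e_n)_n$, since Proposition~\ref{PropOuter} only delivers $(p_n)$ converging to some central projection $c \in \Asecond$ that may be strictly smaller than $1$; without this domination, the averaged $s_n^{F,\varepsilon*}s_n^{F,\varepsilon}$ will reproduce only $(\sum_g \alpha_g(p_n))\cdot(e_n)_n$ rather than $(e_n)_n$. Resolving this will require a careful selection of the pure state in the proof of Proposition~\ref{PropOuter} so that its $G$-orbit covers the support of $(e_n)_n$, or an exhaustion argument across a finite cover of the relevant portion of $\Asecond$ by such central projections. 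This is precisely where the finer control of property (TI) -- uniform over the dense subset $\{a_k\}$ and over all cut-off levels $m$ -- supersedes the coarser (SI), and follows the outline of \cite[Proposition 4.5]{MS2} with (TI) in place of (SI).
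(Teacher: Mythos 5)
Your overall skeleton (reduce to a finite set $E\subset G$ and $\varepsilon>0$, quotient out the inner part $K$, use the small Rohlin tower of Proposition~\ref{PropOuter} to make translates orthogonal, then F\o lner-average) matches the paper, and your verification that $(e_n)_n,(f_n)_n$ satisfy the (TI) hypothesis is fine. But the step where you actually produce the element to be averaged has a genuine gap, and it is exactly the one you flag as ``the hardest step.'' If you apply (TI) to the cut-down, rescaled data $\bigl(|F'|^{-1}p_ne_np_n,\ f_n\bigr)$, then first of all (TI) only gives $(f_nt_n)_n=(t_n)_n$, not $(p_nt_n)_n=(t_n)_n$, so the supports of the translates $\alpha_g(t_n)$ are \emph{not} forced to be orthogonal; and even granting that, the averaged square is $(s_n^*s_n)_n=|F'|^{-1}\sum_{g}(\alpha_g(p_n)e_n\alpha_g(p_n))_n$, which cannot equal $(e_n)_n$: the tower elements $p_n$ converge to the central cover $c_\varphi\in\Asecond$ of a pure state, which is orthogonal to the support of the normal extension of every trace (the atomic and type II$_1$ parts of $\Asecond$ are disjoint central summands), so the $p_n$ are tracially null and no finite collection of orbit translates of $c_\varphi$ can dominate $(e_n)_n$. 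Neither ``choosing the pure state so its orbit covers the support'' nor an exhaustion over such central projections can repair this.

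The paper's resolution is structurally different and is the whole point of introducing (TI): you keep $(e_n)_n$ intact and instead shrink the \emph{support} datum, applying (TI) to the pair $\bigl((e_n)_n,(y_n)_n\bigr)$ with $y_n=x_{l_n}^{1/2}f_nx_{l_n}^{1/2}$, where $(x_l)_l$ is the Rohlin tower. The (TI) ratio condition survives this replacement because its denominator is $\min_{\tau\in\TA}\tau\bigl(h_m(y_n^{1/2}a_ky_n^{1/2})\bigr)$, which stays bounded below (via $\lim_l\|h_m(x_l^{1/2}a_kx_l^{1/2})\|=1$, \cite[Lemma 2.4]{MS3}, and $(f_n-f_n^2)_n\in\Iomega$) even though $\tau(y_n)$ itself is essentially $\tau(c_\varphi f_n)=0$ --- this is precisely where (SI) would fail and (TI) is needed. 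One then gets $(r_n)_n$ with $(r_n^*r_n)_n=(e_n)_n$ \emph{exactly} and $(y_nr_n)_n=(r_n)_n$, so the cross terms in $s_n=|F|^{-1/2}\sum_{f\in F}\alpha_{g_f}(r_n)$ vanish by orthogonality of the $\alpha_{g_f}(y_n)$, while the diagonal terms give $|F|^{-1}\sum_f(\alpha_{g_f}(e_n))_n=(e_n)_n$ by the assumed $\alpha$-invariance of $(e_n)_n$; no domination of the support of $(e_n)_n$ by the tower is ever required. (Your first application of (TI) to $(e_n,f_n)$, producing $(s_n')_n$, is never used and can be dropped.)
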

\begin{proof}
Let $E$ be a finite subset of $G$ and $\varepsilon >0$. Since $G$ is countable it is enough to obtain $(s_n)_n\in \Acentral$ such that $\limomega\left\lVert \alpha_g(s_n)- s_n \right\rVert < \varepsilon$ for $g\in E$, $(s_n^*s_n)_n =(e_n)_n$, $(f_ns_n)_n = (s_n)_n$ in $\Acentral$. 

We let $K$ be the subgroup  of $G$ consisting of all $g\in G$ such that $\alpha_g$ is an inner automorphism. Note that $K$ is a normal subgroup, then the quotient group $G/K$ is amenable.

We set a countable dense subset $\{a_k\}_k$ in $\Apositiveone$. Because of Proposition \ref{PropOuter}, we obtain a net $(\xlambda)_{\lambda\in\Lambda}$ of positive contractions in $A$ such that 

\[ \limlambda \left\lVert [\xlambda, a]\right\rVert =0,\quad  \limlambda \left\lVert \alpha_g(\xlambda) \xlambda\right\rVert =0,\ \forany\ a\in A,{\rm \ and\ }g\in G\setminus K, \]
 and $\xlambda$ strongly converges to a central projection $c\in \Asecond$.  Note that for any $a \in \Apositiveone$, $\xlambda^{1/2} a\xlambda^{1/2}\rightarrow ac$, $\lambda\to\infty$ strongly in $\Asecond$. Then, for any $k, m\in\N$ we have $h_m(\xlambda^{1/2} a_k \xlambda^{1/2}) \to h_m(a_k)c$, $\lambda\to\infty$, strongly in $\Asecond$. Since $A$ is simple, it follows that 
\[ \limlambda \left\lVert h_m (\xlambda^{1/2}a_k\xlambda^{1/2})\right\rVert = \left\lVert h_m(a_k)c \right\rVert = \left\lVert h_m (a_k)\right\rVert =1.\]
Then there exists a sequence $(x_l)_{l\in\N}$ of positive contractions in $A$ such that $ (x_l)_l\in \Acentral$, 
\[ \liml \left\lVert \alpha_g(x_l)x_l\right\rVert =0,\quad \liml \left\lVert h_m (x_l^{1/2} a_k x_l^{1/2}) \right\rVert =1,\]
for any $g\in G\setminus K$ and $k$, $m\in\N$.

For any $N\in\N$, there exists a neighborhood $\Omega_N\in\omega$ such that $h_m(x_l^{1/2} a_k x_l^{1/2})$ is a non-zero positive element in $A$ for any $k$, $m\in\{1,2,...,N\}$, $l\in\Omega_N$. By  \cite[Lemma 2.4]{MS3}, for $k, m\in\{1,2,....,N\}$ and $l\in \Omega_N$ there exists $\alphaklm >0$ such that 
\[ \alphaklm\limomega \min_{\tau\in \TA} \tau(f_n) \leq \limomega\min_{\tau\in \TA} \tau(f_n^{1/2} h_m(x_l^{1/2}a_k x_l^{1/2}) f_n^{1/2}).\]
Let $\yln = x_l^{1/2} f_n x_l^{1/2}$, $l$, $n\in\N$. Since $(f_n-f_n^2)_n\in \Iomega$, it follows that for any $m\in\N$, 
\[\left(h_m \left(\yln^{1/2} a_k \yln^{1/2}\right) - h_m \left(x_l^{1/2} a_k x_l^{1/2}\right)f_n\right)_n\in \Iomega.\]
For this reason, we see that 
\begin{align*}
 0< \alphaklm\limomega\min_{\tau\in\TA} \tau(f_n) &\leq\limomega\min_{\tau} \tau \left(h_m\left(x_l^{1/2} a_k x_l^{1/2}\right)f_n\right) \\
&=\limomega\min_{\tau} \tau\left( h_m \left(\yln^{1/2} a_k \yln^{1/2}\right)\right),
\end{align*}
for any $k, m\in\{1,2,....,N\}$ and $l\in \Omega_N$, which implies 
\[ \limomega \frac{\max_{\tau\in\TA} \tau(e_n)}{\min_{\tau\in\TA}\tau\left(h_m\left(\yln^{1/2}a_k\yln^{1/2}\right)\right)} =0,\quad \forany k, m\in\{1,2,...,N\},\ l\in\Omega_N.\]
Thus it is not so hard to obtain a sequence $(\lNn)_n$ of natural numbers such that $\lNn\to\infty$ when $n\to\omega$, $\left(\left[x_{\lNn}, f_n\right]\right)_n =0$ in $\Acentral$, and  
\[\limomega\frac{\max_{\tau\in\TA} \tau(e_n)}{\min_{\tau\in\TA} \tau\left( h_m\left(y_{\lNn, n}^{1/2}a_ky_{\lNn,n}^{1/2}\right)\right)} =0\quad \forany k, m\in\{1,2,...,N\}.\]

Hence a standard diagonal argument yields a sequence $(l_n)_n$ of natural numbers such that  $l_n\to\infty$, $n\to\omega$, $([x_{l_n}, f_n])_n =0$ in $\Acentral$, (then $y_{l_n,n}\in\Acentral$), and  
\[\limomega\frac{\max_{\tau\in\TA} \tau(e_n)}{\min_{\tau\in\TA} \tau\left(h_m\left(y_{l_n, n}^{1/2}a_ky_{l_n,n}^{1/2}\right)\right)} =0\quad \forany k, m\in\N.\]
Indeed, for $N\in\N$ we inductively obtain a neighborhood $U_N\in\omega$ such that $U_{N+1}\subset U_N$, $\displaystyle \bigcap_{N\in\N}U_N =\emptyset$, and 
for any $n\in U_N$, and $k$, $m\in\{1,2,...,N\}$, 
\[\lNn \geq N,\quad \left\lVert \left[ x_{\lNn}, f_n\right]\right\rVert< \frac{1}{N},\quad \frac{\max_{\tau}\tau(e_n)}{\min_{\tau}\tau\left(h_m\left( y_{\lNn, n}^{1/2} a_k y_{\lNn, n}^{1/2}\right)\right)} < \frac{1}{N}.\]
 We define $l_n=1$ for $n\in\N\setminus U_1$, and $l_n =\lNn$ for $n\in U_N\setminus U_{N+1}$. This $(l_n)_n$ satisfies the above conditions.

Set $y_n =\ylnn$, $n\in\N$. Applying  property (TI) to $(e_n)_n$ and $(y_n)_n$ we obtain a sequence $(r_n)_n$ in $A$ such that $(r_n)_n\in\Acentral$, $(r_n^*r_n)_n = (e_n)_n$, and $(y_nr_n)_n= (r_n)_n$ in $\Acentral$.

We let $H$ denote the amenable group $G/K$ and $\talpha$ the action of $H$ on $\Acentral$ defined by $\talpha_{[g]} (x)=\alpha_g(x)$ for $x\in\Acentral$ and $g\in G$, where $[g]\in H$ is the equivalence class of $g$. Set $[E]=\{[g]\in H \ |\ g\in E\}$.
Let $F\subset H$ be a F\o lner set for $[E]$ and $\varepsilon^2/4 >0$. Fixing a representative $g_f\in G$ for each element in $f\in F$ (i.e., $[g_f]=f$),  we define 
\[ s_n = |F|^{-1/2} \sum_{f\in F} \alpha_{g_f} (r_n),\quad n\in\N.\]
This $(s_n)_n$ satisfies the desired conditions. Actually, by a small Rohlin tower $(x_l)_l$ we have $(\alpha_{g}(y_n)\alpha_{h}(y_n))_n =0$ in $\Acentral$ for $[g]\neq [h]\in H$. Then, for $g\in E$ it follows that 
\begin{align*}
\left\lVert(\alpha_g(s_n)-s_n)_n\right\rVert &= \left\lVert \talpha_{[g]} ((s_n)_n) -(s_n)_n\right\rVert \\
&\leq \frac{1}{|F|^{1/2}}\left\lVert\sum_{f\in [g]F\setminus F} \talpha_{f}((r_n)_n)\right\rVert + \frac{1}{|F|^{1/2}} \left\lVert\sum_{f\in F\setminus [g]F} \talpha_{f}((r_n)_n)\right\rVert \\ 
&\leq \frac{\left\lvert [g]F\setminus F\right\rvert^{1/2}}{|F|^{1/2}}+\frac{\left\lvert F\setminus [g]F\right\rvert^{1/2}}{|F|^{1/2}} < \varepsilon, 
\end{align*}
and that
\begin{align*}
(s_n^*s_n)_n &= \frac{1}{|F|}\sum_{f, h\in F} (\alpha_{g_f}(r_n)^* \alpha_{g_h} (r_n))_n \\
&=\frac{1}{|F|} \sum_{f\in F} (\alpha_{g_f} (r_n^*r_n))_n = (e_n)_n \quad{\rm in\ }\Acentral. 
\end{align*}
Since $(f_n)_n\geq (y_n)_n$, we see that $(f_ns_n)_n =(s_n)_n$ in $\Acentral$.
\end{proof}

\begin{theorem}\label{ThmEmbedding}
Let $G$ be a countable discrete amenable group, $A$ a unital separable simple \Cs\ with an action $\alpha$ of $G$. Assume that $A$ has property (TI) and for any $k\in\N$ there exists an embedding of $C_0((0,1])\otimes M_k$ into the $\alpha$-fixed point subalgebra of $\Acentral$ satisfying the condition in Corollary \ref{CorEmbedding}. 
Then there exists a unital embedding of $\Js$ into the $\alpha$-fixed point subalgebra of $\Acentral$. In particular, Theorem \ref{ThmMain} holds.
\end{theorem}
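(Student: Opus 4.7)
The plan is to run an $\alpha$-equivariant version of the Matui--Sato method for $\Js$-absorption from \cite{MS3}, \cite{Sat1} (compare \cite[Proposition 4.5]{MS2} and \cite[Proposition 4.2]{Sat0}), using Proposition \ref{PropSI} as the equivariant substitute for property (SI). The ``in particular'' assertion will then follow by a F\o lner averaging argument and the Toms--Winter characterization of $\Js$-absorption.

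Fix $k\in\N$. The hypothesis furnishes an $\alpha$-fixed c.p.c.\ order zero map $\varphi=(\varphi_n)_n:C_0((0,1])\otimes M_k\to\Acentral$ with $1-\varphi(\id\otimes 1_k)\in\Iomega$. For small $\varepsilon>0$, let $g_{\varepsilon}\in C_0((0,1])$ be the cut-off with $g_{\varepsilon}(t)=1$ for $t\geq\varepsilon$ and $g_{\varepsilon}(t)=t/\varepsilon$ for $t\in[0,\varepsilon]$, and set $f_n:=\varphi_n(g_{\varepsilon}\otimes 1_k)$. The pointwise bound $1-g_{\varepsilon}(t)\leq(1-t)/\varepsilon$ on $[0,1]$ gives $(1-f_n)_n\in\Iomega$, and $g_{\varepsilon}-g_{\varepsilon}^2\leq 1-g_{\varepsilon}$ likewise gives $(f_n-f_n^2)_n\in\Iomega$, while $(f_n)_n$ inherits $\alpha$-fixedness from $\varphi$. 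Putting $e_n:=1-f_n$, Proposition \ref{PropSI} produces an $\alpha$-fixed sequence $(s_n)_n\in\Acentral$ with $(s_n^*s_n)_n=(e_n)_n$ and $(f_ns_n)_n=(s_n)_n$ in $\Acentral$.

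This pair -- an $\alpha$-fixed order zero map $\varphi$ of tracially full support together with an $\alpha$-fixed supplementary partial isometry $(s_n)_n$ occupying the orthogonal complement of $\varphi(\id\otimes 1_k)$ -- is exactly the data fed into the argument in \cite[proof of Proposition 4.5]{MS2} and \cite[proof of Proposition 4.2]{Sat0} to upgrade $\varphi$ into a unital $*$-homomorphism $M_k\to\Acentral$. Since every intermediate step (functional calculus on order zero maps, reindexing, and averaging against the $\alpha$-fixed quasi-central approximate identities supplied by Lemma \ref{LemQuasiCent}) preserves $\alpha$-fixedness, the resulting $*$-homomorphism lands in the $\alpha$-fixed subalgebra. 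Carrying this out uniformly in $k$ and assembling the pieces via the standard reindex trick, as in the cited proofs, produces the desired unital embedding of $\Js$ into the $\alpha$-fixed subalgebra of $\Acentral$.

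For the last sentence of the theorem, the hypotheses of Theorem \ref{ThmMain} imply those of Theorem \ref{ThmEmbedding}: $\Js$-absorption of $A$ gives strict comparison, hence property (TI) by Proposition \ref{PropTI}, and Corollary \ref{CorEmbedding} supplies the required $\alpha$-fixed embeddings of $C_0((0,1])\otimes M_k$. Since $G$ is countable amenable, every sequence representing an $\alpha$-fixed element of $\Acentral$ is central in $A$ and, by $\alpha$-fixedness, commutes in norm with the implementing unitaries $u_g\in A\rtimes_{\alpha}G$; hence the unital embedding of $\Js$ into the $\alpha$-fixed subalgebra of $\Acentral$ yields a unital embedding of $\Js$ into $(A\rtimes_{\alpha}G)_{\omega}\cap(A\rtimes_{\alpha}G)'$, whence $(A\rtimes_{\alpha}G)\otimes\Js\cong A\rtimes_{\alpha}G$ by the Toms--Winter characterization. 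The principal technical obstacle is the functional-calculus preparation of $(f_n)_n$, $(e_n)_n$ above, arranging that $(f_n-f_n^2)_n\in\Iomega$ with all sequences simultaneously $\alpha$-fixed so that Proposition \ref{PropSI} applies; once that is in place, the MS3/Sat1 upgrade transfers with only notational changes to the equivariant setting.
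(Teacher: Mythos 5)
Your overall architecture matches the paper's (take the $\alpha$-fixed order zero map from Corollary \ref{CorEmbedding}, apply Proposition \ref{PropSI} to a suitable pair $(e_n)_n$, $(f_n)_n$, then assemble a unital copy of a building block of $\Js$), but the concrete preparation you make is the wrong one, and it breaks the final step. You set $f_n=\varphi_n(g_{\varepsilon}\otimes 1_k)$, the (cut-off of the) \emph{full} support of the order zero map, whereas the paper takes $f_n=\varphi_n(\id\otimes e_{1,1})$, the $(1,1)$-\emph{corner}, together with $e_n=1-\varphi_n(\id\otimes 1_k)$ (no cut-off is needed: $(f_n-f_n^2)_n=(\varphi_n(\id\otimes e_{1,1})(1-\varphi_n(\id\otimes 1_k)))_n\in\Iomega$ already follows from order-zeroness and the hypothesis $1-\varphi(\id\otimes 1_k)\in\Iomega$). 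The reason the corner matters is what you do with $s=(s_n)_n$ afterwards: the paper sets $c_j=\varphi(\id^{1/2}\otimes e_{1,j})$ and verifies the presenting relations of the dimension drop algebra $I_{k,k+1}$, namely $c_ic_j^*=\delta_{i,j}c_1^2$, $\sum_jc_j^*c_j+s^*s=1$, and $c_1s=s$. With your $s$, the relation $c_1s=s$ fails ($s$ is only known to sit under the whole support $\varphi(g_\varepsilon\otimes 1_k)$, not under the first corner), and the unitality relation fails as well, since $\sum_jc_j^*c_j+s^*s=1-\varphi((g_\varepsilon-\id)\otimes 1_k)$ and $\varphi((g_\varepsilon-\id)\otimes 1_k)$ is a positive element of strictly positive trace, so it is not in $\Iomega$ and cannot be absorbed.

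A second, related problem is your description of the target: you say the cited arguments ``upgrade $\varphi$ into a unital $*$-homomorphism $M_k\to\Acentral$.'' That is not what those arguments do and is not achievable in general; the whole point of property (SI)/(TI) in this circle of ideas is that one only gets a unital embedding of $I_{k,k+1}$ (whose relations tolerate the leftover $s^*s$ under the first corner), and $\Js$ is then obtained as an inductive limit of dimension drop algebras. So the ``MS3/Sat1 upgrade transfers with only notational changes'' claim hides exactly the step where your choice of $(f_n)_n$ is incompatible with the relations being verified. Your deduction of Theorem \ref{ThmMain} from the embedding statement (fixed-point central sequences asymptotically commute with $A$ and with the implementing unitaries, hence land in the central sequence algebra of $A\rtimes_{\alpha}G$, and one concludes by the standard characterization of $\Js$-absorption) is fine.
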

\begin{proof}
Let $\varphi : C_0((0,1])\otimes M_k \rightarrow \Acentral$ be a $*$-homomorphism obtained in Corollary \ref{CorEmbedding}. Let $\varphi_n : C_0((0,1])\otimes M_k \rightarrow A$, $n\in\N$ be a sequence of completely positive contractions such that $(\varphi_n(a))_n =\varphi(a)$ in $\Acentral$. We denote by $\eij$, $i,j=1,2,...,k$ the standard matrix units of $M_k$. The condition in Corollary \ref{CorEmbedding} induces that 
\[ \limomega\max_{\tau\in\TA} \tau(1-\varphi_n(\id\otimes 1_k))=0, \quad\limomega\min_{\tau\in\TA} \tau(\varphi_n(\id\otimes e_{1,1})) =1/k.\]
We let $e_n$, $f_n$ be positive contractions in $A$ defined by 
\[ e_n = 1-\varphi_n(\id\otimes 1_k),\quad f_n =\varphi_n(\id\otimes e_{1,1}).\]Since the image of $\varphi$ is contained in the $\alpha$-fixed point algebra, especially we have $(\alpha_g(e_n))_n=(e_n)_n$ and $(\alpha_g(f_n))_n=(f_n)_n$ in $\Acentral$ for any $g\in G$.  Note that 
\[(\varphi_n(\id\otimes e_{1,1})-\varphi_n(\id\otimes e_{1,1})^2)_n=(\varphi_n(\id\otimes e_{1,1})(1-\varphi_n(\id\otimes 1_k)))_n\in \Iomega.\]

Applying Proposition \ref{PropSI} to $e_n$ and $f_n$, we obtain a sequence $(s_n)_n$ in $A$ such that $(s_n)_n \in \Acentral$, $(\alpha_g(s_n))_n=(s_n)_n$ for any $g\in G$, $(s_n^*s_n)_n =(e_n)_n$ and $(f_ns_n)_n = (s_n)_n$ in $\Acentral$. We define $c_j =\varphi(\id^{1/2} \otimes e_{1,j})\in \Acentral$, $j=1,2,...,k$ and $s=(s_n)_n\in\Acentral$. Then these elements satisfy 
\[ c_ic_j^* =\delta_{i,j} c_1^2,\quad \sum_{j=1}^k c_j^*c_j + s^*s =1,\quad c_1 s=s\] 
 in the $\alpha$-fixed point subalgebra of $\Acentral$. These conditions construct the dimension drop algebra $I_{k,k+1}$ as the universal \Cs. Recall that $I_{k, k+1}$ was defined by 
\begin{align*}
I_{k,k+1}=\{ f\in C([0,1])\otimes M_k\otimes M_{k+1}\ | \ f(0)\in M_k \otimes \C 1_{k+1},\quad f(1)\in \C 1_{k}\otimes M_{k+1} \},
\end{align*}
and an inductive limit of $I_{k,k+1}$ provides the Jiang-Su algebra  \cite{JS}. Now we obtain a unital $*$-homomorphism from $I_{k,k+1}$ into the $\alpha$-fixed point subalgebra of $\Acentral$. Therefore we can construct a unital embedding of $\Js$ into the $\alpha$-fixed point subalgebra of $\Acentral$, see \cite[Proposition 7.2.2]{Ror0}, \cite[Proposition 2.2]{TW}, \cite[Theorem 4.9]{MS2}.
\end{proof}

As a specific application of Theorem \ref{ThmMain}, we obtain the following results. 
Let $G$ be a countable discrete amenable group, and $B$ be the CAR algebra. We denote by $\sigma$ the Bernoulli shift on $B\cong\bigotimes_{g\in G} B$. 
Then Theorem \ref{ThmMain} implies that $\bigotimes_{G} B \rtimes_{\sigma} G$ absorbs the Jiang-Su algebra tensorially. Furthermore, we can also see a similar result for the Bernoulli shift on the Jiang-Su algebra, this is a generalization of \cite[Corollary 1.4]{Sat0}
\begin{corollary} For a countable discrete amenable group $G$, let $\sigma_{\Js}$ be the Bernoulli shift on $\bigotimes_{g\in G} \Js\cong\Js$. Then the following holds
\[ \left(\bigotimes_G \Js\right)\rtimes_{\sigma_{\Js}} G \otimes \Js \cong \left(\bigotimes_G \Js\right)\rtimes_{\sigma_{\Js}} G.\]
\end{corollary}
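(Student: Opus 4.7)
The plan is to recognize this corollary as a direct instance of Theorem~\ref{ThmMain}, so the task reduces to verifying the hypotheses for $A=\bigotimes_G\Js$ with the action $\alpha=\sigma_{\Js}$. First I would recall that the Jiang-Su algebra is strongly self-absorbing, so in particular $\bigotimes_G \Js \cong \Js$ (this is built into the statement), and hence $A$ is unital, separable, simple, nuclear, and $\Js$-absorbing (trivially, $A\otimes\Js\cong\Js\otimes\Js\cong\Js\cong A$). Also $\Js$ has a unique tracial state, so $A$ does too; therefore $\dextreme(T(A))$ is a single point, which is trivially compact and finite dimensional.

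Next I would observe that the remaining hypothesis on the action, namely that $\sigma_{\Js}$ fixes every tracial state of $A$, is automatic: when there is a unique tracial state $\tau$, the composition $\tau\circ\sigma_{\Js,g}$ is again a tracial state of $A$, and by uniqueness it must equal $\tau$ for every $g\in G$. Hence all hypotheses of Theorem~\ref{ThmMain} are met.

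Applying Theorem~\ref{ThmMain} to $G$, $A=\bigotimes_G\Js$ and $\alpha=\sigma_{\Js}$ yields
\[
\left(\bigotimes_G\Js\right)\rtimes_{\sigma_{\Js}} G \,\otimes\, \Js \;\cong\; \left(\bigotimes_G\Js\right)\rtimes_{\sigma_{\Js}} G,
\]
which is exactly the asserted isomorphism. There is no real obstacle: the only subtlety worth stating explicitly is the identification $\bigotimes_G\Js\cong\Js$, which is a standard property of strongly self-absorbing $\mathrm{C}^*$-algebras over a countable amenable (hence in particular countable) index set, and the automatic invariance of the unique trace under $\sigma_{\Js}$. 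Thus the corollary is essentially a packaging of Theorem~\ref{ThmMain} in the unique-trace case applied to Bernoulli shifts on $\Js$.
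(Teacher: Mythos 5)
Your proposal is correct and is exactly the paper's (implicit) argument: the corollary is stated as a direct application of Theorem~\ref{ThmMain}, and your verification — $\bigotimes_G\Js\cong\Js$ is unital separable simple nuclear and $\Js$-absorbing with a unique tracial state, so $\dextreme(T(A))$ is a point and $\sigma_{\Js}$ automatically fixes the trace — is precisely what is needed. Nothing is missing.
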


\noindent{\it Acknowledgements.}\quad This paper was done during the author's stay at Purdue University. The author would like to thank Marius Dadarlat for his  warm hospitality, and David Kerr and N. Christopher Phillips for helpful conversations. The author is also indebted to Narutaka Ozawa for valuable suggestions on Proposition \ref{PropEmbedCone} and \cite{MK}.

\noindent Yasuhiko Sato \\
Graduate School of Science \\
Kyoto University \\
Sakyo-ku, Kyoto 606-8502\\ 
Japan \\
ysato@math.kyoto-u.ac.jp

\end{document}